\documentclass[a4paper,11pt]{amsart}
\usepackage{amssymb}
\usepackage{cite}
\usepackage{ifthen}
\usepackage[dvips]{graphicx}

\usepackage{booktabs}
\nonstopmode \numberwithin{equation}{section}
\newtheorem*{theoA}{Theorem A}
\newtheorem*{theoB}{Theorem B}
\newtheorem*{theoC}{Theorem C}
\newtheorem*{theoD}{Theorem D}
\newtheorem*{theoE}{Theorem E}
\newtheorem*{theoF}{Theorem F}

\usepackage{amssymb}
\usepackage{ifthen}
\usepackage{graphicx}
\usepackage{amsmath}
\usepackage[T1]{fontenc} 
\usepackage[utf8]{inputenc}
\usepackage[usenames,dvipsnames]{color}
\usepackage{color}
\usepackage[english]{babel}
\usepackage{fancyhdr}
\usepackage{fancybox}
\usepackage{tikz}

\theoremstyle{plain}
\newtheorem{prop}{Proposition}

\newtheorem{ques}{Question}[section]

\theoremstyle{definition}

\newtheorem{exm}{Example}[section]
\newtheorem{cor}{Corollary}[section]
\newtheorem{thm}{Theorem}[section]

\newtheorem{lem}{Lemma}[section]
\newtheorem{conj}{Conjecture}[section]
\newtheorem{prob}{Problem}
\newtheorem{rem}{Remark}[section]

\theoremstyle{plain}

\newcounter{minutes}
\divide\time by 60
\newcounter{hours}
\multiply\time by 60
\addtocounter{minutes}{-\time}

\newcounter {own}
\def\theown {\thesection       .\arabic{own}}

\newenvironment{pf}[1][]{%
	\vskip 3mm
	\noindent
	\ifthenelse{\equal{#1}{}}%
	{{\slshape Proof. }}%
	{{\slshape #1.} }%
}%
{\qed\bigskip}

\newcounter{alphabet}

\def\be{\begin{equation}}
	\def\ee{\end{equation}}

\newcommand{\bee}{\begin{enumerate}}
	\newcommand{\eee}{\end{enumerate}}

\newcommand{\blem}{\begin{lem}}
	\newcommand{\elem}{\end{lem}}
\newcommand{\bthm}{\begin{thm}}
	\newcommand{\ethm}{\end{thm}}
\newcommand{\bcor}{\begin{cor}}
	\newcommand{\ecor}{\end{cor}}
\newcommand{\beg}{\begin{examp}}
	\newcommand{\eeg}{\end{examp}}
\newcommand{\begs}{\begin{examples}}
	\newcommand{\eegs}{\end{examples}}

\newcommand{\bdefn}{\begin{defn}}
	\newcommand{\edefn}{\end{defn}}

\newcommand{\bprob}{\begin{prob}}
	\newcommand{\eprob}{\end{prob}}
\newcommand{\bei}{\begin{itemize}}
	\newcommand{\eei}{\end{itemize}}

\newcommand{\bcon}{\begin{conj}}
	\newcommand{\econ}{\end{conj}}
\newcommand{\bcons}{\begin{conjs}}
	\newcommand{\econs}{\end{conjs}}
\newcommand{\bprop}{\begin{prop}}
	\newcommand{\eprop}{\end{prop}}
\newcommand{\br}{\begin{rem}}
	\newcommand{\er}{\end{rem}}
\newcommand{\brs}{\begin{rems}}
	\newcommand{\ers}{\end{rems}}
\newcommand{\bo}{\begin{obser}}
	\newcommand{\eo}{\end{obser}}
\newcommand{\bos}{\begin{obsers}}
	\newcommand{\eos}{\end{obsers}}
\newcommand{\bpf}{\begin{pf}}
	\newcommand{\epf}{\end{pf}}
\newcommand{\ba}{\begin{array}}
	\newcommand{\ea}{\end{array}}
\newcommand{\beq}{\begin{eqnarray}}
	\newcommand{\beqq}{\begin{eqnarray*}}
		\newcommand{\eeq}{\end{eqnarray}}
	\newcommand{\eeqq}{\end{eqnarray*}}

\begin{document}

\title{On the Conjecture of C. C. Yang and periodicity of meromorphic functions}

\author{Molla Basir Ahamed}
\address{Molla Basir Ahamed, 
	Department of Mathematics, 
	Jadavpur University, 
	Kolkata-700032, 
	West Bengal, India.}
\email{mbahamed.math@jadavpuruniversity.in}

\subjclass[{AMS} Subject Classification:]{Primary 30D35 Secondary 30D30}
\keywords{Meromorphic function, derivative, periodicity, Picard exceptional value, finite order.}

\def\thefootnote{}
\footnotetext{ {\tiny File:~\jobname.tex,
printed: \number\year-\number\month-\number\day,
          \thehours.\ifnum\theminutes<10{0}\fi\theminutes }
} \makeatletter\def\thefootnote{\@arabic\c@footnote}\makeatother
\begin{abstract} 
In this paper, we investigate two recent conjectures posed by Yang concerning the periodicity of entire functions. A portion of these problems was recently addressed by Qiong and Peichu [Acta Math. Sci. Ser. B (Engl. Ed.) 38 (2018) 209–214] and Liu [Bull. Aust. Math. Soc. 101 (2020) 290–296], who provided partial solutions within the class of entire functions. The purpose of this work is to establish several new periodicity theorems that not only significantly improve the results of Qiong–Peichu and Liu, but also extend them to a much broader setting. Furthermore, we provide a series of illustrative examples to demonstrate the sharpness and validity of the hypotheses in our main results.
\end{abstract}
\maketitle
\pagestyle{myheadings}
\markboth{M. B. Ahamed}{Certain dynamical Behavior of meromorphic solutions for some complex difference equations}
\section{\bf Introduction}
The classical Picard theorem states that if a meromorphic function $f$ in the complex plane $\mathbb{C}$ omits three distinct values in the extended complex plane $\widehat{\mathbb{C}} := \mathbb{C} \cup \{\infty\}$, then $f$ reduces to a constant. As an immediate consequence, the equation $e^z = \eta$ possesses at least one solution in $\mathbb{C}$ for any $\eta \in \mathbb{C} \setminus \{0, \infty\}$. However, Picard's theorem does not provide information regarding the multiplicity or cardinality of such roots. To address this limitation, Nevanlinna \cite{Nevanlinna & AM & 1925} established value distribution theory in 1925, which not only yields a streamlined proof of Picard's theorem but, more fundamentally, provides a precise quantitative estimate for the number of roots of such equations in terms of the growth of the function $f$. For a comprehensive overview of recent developments concerning value distributions, geometric properties, derivatives, and the behavior of meromorphic functions under differential or difference-shift operators, we refer the reader to \cite{Ahamed & CKMS & 2018, Ahamed & IJPAM & 2023, Ahamed & Mandal & MM & 2023, Banerjee & Ahamed & MS & 2019, Banerjee & Ahamed & RCMP & 2018, Mallick & Ahamed & AMP & 2022}.\vspace{1.2mm}

In recent years, considerable attention has been focused on the periodicity of meromorphic functions sharing values (see, e.g., \cite{Chen & Chen & Li & 2012, Heitto & CVEE & 2011, Li & Gao & ADM & 2011}), small functions (see \cite{Heitto & JMMA & 2009, Luo & Lin & JMMA & 2011, Qi & Yan & Liu & CMA & 2010}), or finite sets (see \cite{Zha & JMMA & 2010}).

Prior to introducing the primary conjectures regarding the periodicity of transcendental entire functions, we recall an elegant observation by Yang. Specifically, if an entire function $f$ satisfies the nonlinear differential equation
\begin{align}
	\label{e1.1} f(z)f^{\prime\prime}(z)=-\sin^2 z,
\end{align}
then $f(z) = \pm\sin z$. Analogously, if $f$ satisfies $f(z)f^{\prime\prime}(z)=-\cos^2 z$, then $f(z) = \pm\cos z$. A fundamental heuristic underlying these cases is that if a transcendental entire function $f$ satisfies a differential relation of the form
\begin{align*}
	f(z)f^{\prime\prime}(z)=h^2(z),
\end{align*}
where $h$ is a given entire function, then $f$ exhibits periodicity.  Based on this observation, \emph{C. C. Yang} asked the following two conjectures.
\begin{conj}\label{coj1.1}
	Let $ f $ be a transcendental entire function and $ k $ be a positive integer. If either $ ff^{(k)} $ is periodic or $ ff^{(k)}=h^2\; (k\geq 2) $, for some entire function $ h $, then $ f $ must be periodic. 
\end{conj}\par 
\begin{rem}
	Regarding the aforementioned conjecture, it is natural to observe that for $k=1$, the periodicity of $ff'$ or the validity of the relation $ff'=h^2$ does not necessarily imply the periodicity of $f$. This fact is clearly illustrated by the following examples.
\end{rem}
\begin{exm}
	Let $  p(z) $ be a non-constant polynomial. We choose 
	\begin{align*}
		f(z)=e^{\displaystyle\int p^2(z)dz}.
	\end{align*} It is clear that 
	\begin{align*}
		f(z)f^{\prime}(z)=\left(p(z)e^{\displaystyle\int p^2(z)dz}\right)^2,
	\end{align*} but $ f(z)f^{\prime}(z) $ is not periodic. Hence $ f $ is also not periodic.	
\end{exm}
\begin{conj}\label{coj1.2}
	Let $ f $ be a transcendental entire function and $ k $ be a positive integer. If $ ff^{(k)} $\; $(k\geq 1)$ is periodic, then $ f $ must be periodic. 
\end{conj}
The next example shows that there exists a transcendental entire function satisfying Conjecture \ref{coj1.2}.
\begin{exm}
	Let $ f(z)=\sin\left(\frac{2\pi z}{c}\right) $. For any $ k\in\mathbb{N} $, we have \[ f^{(k)}(z)= \begin{cases} 
		\left(-1\right)^{k/2}\left(\frac{2\pi }{c}\right)^{k}\sin\left(\frac{2\pi z}{c}\right), & k\; \text{is even}  \\
		\left(-1\right)^{(k-1)/2}\left(\frac{2\pi }{c}\right)^{k}\cos\left(\frac{2\pi z}{c}\right), & k\; \text{is odd.}  
	\end{cases}
	\]  We see that the function  $ ff^{(k)} $ is a periodic function, and $ f $ is also.
\end{exm}
Conjecture~\ref{coj1.2} was recently investigated by Liu and Yu \cite{Liu & Yu & BAMS & 2019}. By restricting their study to the class of transcendental entire functions possessing a non-zero Picard exceptional value, they established the following result.
\begin{theoA}\cite{Liu & Yu & BAMS & 2019}
	Let $ f $ be a transcendental entire function with a non-zero Picard exceptional value and let $ k $ be a positive integer. If $ ff^{(k)} $ is a periodic function, then $ f $ is also a periodic function. 
\end{theoA}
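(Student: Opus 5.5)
Because $f$ is transcendental entire and omits a non-zero value $a$, the plan is to write $f$ explicitly, expand $ff^{(k)}$ in exponentials, and read off a period for $f$ from the periodicity of $ff^{(k)}$. By Picard/Hadamard factorization,
\begin{align*}
	f(z)=a+e^{g(z)},
\end{align*}
with $g$ a non-constant entire function. Induction gives $f^{(k)}=P_k(g',\dots,g^{(k)})e^{g}$, where $P_k$ is a differential polynomial in $g'$ of total "weight" $k$, with leading term $(g')^k$. Hence
\begin{align*}
	F:=ff^{(k)}=aP_ke^{g}+P_ke^{2g}.
\end{align*}
Let $c\neq 0$ be a period of $F$. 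Then $F(z+c)=F(z)$ becomes
\begin{align*}
	aP_k(z+c)e^{g(z+c)}+P_k(z+c)e^{2g(z+c)}=aP_k(z)e^{g(z)}+P_k(z)e^{2g(z)}.
\end{align*}
Note that $P_k\not\equiv 0$. If it were, $f^{(k)}\equiv0$ and $f$ would be a polynomial.

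The first step is to show that $g(z+c)-g(z)$ is constant. Relative to the four exponentials $e^{g(z)},e^{2g(z)},e^{g(z+c)},e^{2g(z+c)}$, the coefficients $P_k(z)$ and $P_k(z+c)$ are small functions. This holds when $g$ has finite order, since then $T(r,g')=S(r,e^g)$ by the logarithmic derivative lemma and its shift version. In general one uses $T(r,g^{(j)})=S(r,e^{g})$ and $T(r,e^{g(z+c)})$ comparable to $T(r,e^{g})$ outside an exceptional set. I would invoke Borel's theorem on linear combinations of exponentials with small coefficients, or equivalently the second main theorem / Nevanlinna's theorem on $\sum a_je^{h_j}\equiv 0$.

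The four terms split into groups in which pairwise differences of exponents are constant. The term $e^{2g(z+c)}$ must pair with some other term. Put $\gamma(z)=g(z+c)-g(z)$; the candidates are:
\begin{itemize}
	\item $2g(z+c)-2g(z)$ constant, so $\gamma$ is constant;
	\item $2g(z+c)-g(z)$ constant, so $g(z+c)+\gamma(z)$ is constant;
	\item $2g(z+c)-g(z+c)=g(z+c)$ constant, which is impossible.
\end{itemize}
In the second case, comparing growth forces $g(z+c)\approx g(z)/2$ up to a constant. The other terms must then also group, e.g.\ $e^{g(z+c)}$ with $e^{2g(z)}$. That yields $g(z+c)=2g(z)+\text{const}$ together with $2g(z+c)=g(z)+\text{const}$, so $3g$ is constant, a contradiction. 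Thus $g(z+c)=g(z)+d$ for some constant $d$, so $g'$ is $c$-periodic, and therefore $P_k(z+c)=P_k(z)$.

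The identity now reduces to
\begin{align*}
	aP_k(e^{d}-1)e^{g}+P_k(e^{2d}-1)e^{2g}\equiv 0.
\end{align*}
Since $P_k\not\equiv0$ and $e^{g}$ is transcendental, both coefficients must vanish, giving $e^{d}=1$ (using $a\neq0$). Therefore $e^{g(z+c)}=e^{g(z)}$, so $f(z+c)=f(z)$ and $f$ is periodic.

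The main obstacle is the Borel-type step for $g$ of infinite order. There the shifted exponential $e^{g(z+c)}$ need not be controlled by $T(r,e^g)$, and the smallness of $P_k(z+c)$ is delicate. I would handle it in two moves. First, rewrite the identity as $e^{g(z+c)}\bigl(a+e^{g(z+c)}\bigr)P_k(z+c)=F(z)$. Second, argue via the zeros of $a+e^{g(z+c)}$ and $a+e^{g(z)}$: $F$ is periodic, so its zero set is $c$-invariant, and the zeros of $f$ are exactly the zeros of $F$ not coming from $P_k$. Concretely, I would show that $f(z)$ and $f(z+c)$ share $0$ CM up to small exceptions. Then $\bigl(a+e^{g(z+c)}\bigr)/\bigl(a+e^{g(z)}\bigr)=e^{\beta}$, and a second Borel argument in which only $e^{g}$, $e^{g(z+c)}$ and $e^{\beta}$ appear yields $\gamma$ constant.
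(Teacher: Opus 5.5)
\textbf{There is a genuine gap in the Borel step, and it is not limited to infinite order.} The version of Borel's theorem that gives your conclusion (``the terms split into groups whose exponents differ by constants'') needs each coefficient to be small compared with $e^{h_i-h_j}$ for every pair with $h_i-h_j$ nonconstant. Smallness compared with each $e^{h_j}$ separately is not enough; for instance $e^{z^2}-e^{-z}e^{z^2+z}\equiv 0$. For the pair $e^{g(z)},e^{g(z+c)}$ you would therefore need $T(r,P_k)=o(T(r,e^{\gamma}))$, and nothing guarantees this. Take $k=1$ and the finite-order function $g(z)=e^{2\pi i z/c}+z^2$. Then $\gamma(z)=2cz+c^2$, and $T(r,P_1)=T(r,g')$ and $T(r,e^{\gamma})$ are both asymptotic to constant multiples of $r$. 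So your grouping argument is unjustified even when $g$ has finite order. For infinite order, which the theorem must cover (e.g.\ $f=e^{e^z}+1$), your sketch is not yet an argument. The identity already says $f(z+c)/f(z)=P_k(z)e^{-\gamma(z)}/P_k(z+c)$, so ``sharing $0$ CM up to small exceptions'' adds nothing. Your proposed second Borel argument would meet the same two obstacles: no comparison of $T(r,e^{g(z+c)})$ with $T(r,e^{g})$, and no control of the coefficients against $e^{\gamma}$.

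\textbf{The missing idea is a three-term lemma whose hypothesis uses only counting functions.} This is how the paper argues for Theorem~\ref{th1.1}, which contains Theorem A: it uses Lemma~\ref{lem2.1}, which involves zeros and poles rather than characteristic functions of the coefficients. Divide your identity by $aP_k(z+c)e^{g(z+c)}$ to get $f_1+f_2+f_3=1$, where $f_1=-\frac{1}{a}e^{g(z+c)}$, $f_2=\frac{1}{a}\frac{P_k(z)}{P_k(z+c)}e^{2g(z)-g(z+c)}$ and $f_3=\frac{P_k(z)}{P_k(z+c)}e^{g(z)-g(z+c)}$.
\begin{itemize}
\item All zeros and poles of the $f_j$ come from zeros of $P_k(z)$ and $P_k(z+c)$. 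Their counting functions are $S(r,e^{g})$ and $S(r,e^{g(z+c)})$, by the logarithmic derivative lemma applied to $e^{g}$ and to $e^{g(z+c)}$ separately.
\item Since $T(r,e^{g(z+c)})=T(r,f_1)+O(1)$ and $e^{g}=af_2/f_3$, both counting functions are $o(T(r))$, where $T(r)=\max_j T(r,f_j)$. This needs no growth restriction and no shift comparison.
\item If $f_3\equiv 1$, then $f_1+f_2\equiv 0$, i.e.\ $P_k(z)e^{jg(z)}=P_k(z+c)e^{jg(z+c)}$ for $j=1,2$. Dividing gives $e^{g(z+c)}=e^{g(z)}$, so $f$ is $c$-periodic.
\item If $f_2\equiv 1$, then $f_1+f_3\equiv 0$, and dividing the two relations gives $e^{g(z)+g(z+c)}\equiv a^2$. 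Hence $e^{3g}=a^3P_k(z+c)/P_k(z)$ with $P_k(z+c)=(e^{-g})^{(k)}/e^{-g}$, so $3T(r,e^{g})=S(r,e^{g})$, which is impossible.
\end{itemize}
In the entire case this second alternative has to be excluded by the growth argument above, not by zeros of a denominator as in the paper's meromorphic write-up. Note also that the natural conclusions are identities between whole terms, not the constancy of $\gamma$; $\gamma\in 2\pi i\mathbb{Z}$ only comes out at the end.
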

The purpose of the present paper is to extend the validity of Theorem A from the class of transcendental entire functions to the broader setting of transcendental meromorphic functions. As illustrated by the subsequent example, Theorem A remains valid within this more general framework.
\begin{exm}
	Let $ f(z)=\frac{e^{2\pi iz/c}+1}{e^{2\pi iz/c}-1} $. It is clear that $ f(z) $ is a transcendental meromorphic function having $ 1 $ as the Picard exceptional value and $ f $ is periodic also. It is easy to verify that $ f(z)f^{(k)}(z) $ is also periodic with the  same period of $ f $.
\end{exm}
While the aforementioned examples are restricted to the class of transcendental entire or meromorphic functions of finite order, it is noteworthy that Theorem A also holds for functions of infinite order. The following two examples substantiate the validity of this assertion.
\begin{exm}
	Let $f(z) = e^{e^z} + 1$. Clearly, $f$ is an entire function with $1$ as a Picard exceptional value. Since $f(z + 2\pi i) = f(z)$, $f$ is periodic, and it is easy to verify that the product $f(z)f^{(k)}(z)$ is also periodic.
\end{exm}
\begin{exm}
	Consider the function $f(z) = \frac{e^{e^z}+1}{e^{e^z}-1}$. This is a meromorphic function admitting $1$ as a Picard exceptional value. Because $f(z + 2\pi i) = f(z)$, both $f$ and the product $f(z)f^{(k)}(z)$ are periodic functions.
\end{exm}  
In view of these observations, it is natural to pose the following open problem.
\begin{ques}\label{Q-1.1}
	Is it possible to extend the validity of Theorem A to the class of transcendental meromorphic functions?
\end{ques}
Motivated by Question~\ref{Q-1.1}, we investigate the behavior of Theorem A within the framework of meromorphic functions. We answer this question in the affirmative by establishing the following result.  
\begin{thm}\label{th1.1}
	Let $ f $ be a transcendental meromorphic function with a non-zero Picard exceptional value and let $ k $  be a positive integer. If $ ff^{(k)} $ is a periodic function, then $ f $ is also a periodic. 
\end{thm}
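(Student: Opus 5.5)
Let $c$ be a period of $ff^{(k)}$, let $a\neq 0$ be the Picard exceptional value of $f$, and put $F(z)=f(z+c)$. The hypothesis reads
\[
ff^{(k)}\equiv FF^{(k)},
\]
and we must show $F\equiv f$. Set $g=f-a$ and $G=F-a$. Then $g$ and $G$ are transcendental meromorphic functions without zeros, since $a$ is also omitted by $F$.

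\emph{Step 1: comparing poles.} At a pole of $f$ of order $p$, the product $ff^{(k)}$ has a pole of order exactly $2p+k$, and the same holds for $F$. Since $2p+k$ determines $p$, the identity $ff^{(k)}\equiv FF^{(k)}$ forces $f$ and $F$ to have the same poles with the same multiplicities. The same is then true for $g$ and $G$, and neither has zeros. Hence $G/g$ is entire and zero-free, so $G=e^{h}g$ for some entire $h$. It now suffices to prove $e^{h}\equiv 1$.

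\emph{Step 2: reduction to an identity for $g$.} Write $\psi_j=g^{(j)}/g$ and expand by Leibniz:
\[
(e^{h}g)^{(k)}=e^{h}g\,L,\qquad L=\sum_{j=0}^{k}\binom{k}{j}Q_j(h',\dots,h^{(j)})\,\psi_{k-j},
\]
where $Q_j$ are the usual differential polynomials with $(e^h)^{(j)}=Q_je^h$. Substituting into $(g+a)g^{(k)}=(G+a)G^{(k)}$ and dividing by $g$ gives
\[
\bigl(\psi_k-e^{2h}L\bigr)\,g \;=\; a\bigl(e^{h}L-\psi_k\bigr). \tag{$*$}
\]
Suppose the coefficients $\psi_k-e^{2h}L$ and $e^{h}L-\psi_k$ are small with respect to $g$. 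If $\psi_k-e^{2h}L\not\equiv 0$, then $(*)$ expresses $g$ as a quotient of small functions, so $T(r,g)=S(r,g)$, which is impossible. Hence both sides vanish, and $\psi_k=e^{2h}L=e^{h}L$. If $L\equiv0$, then $g^{(k)}\equiv 0$, so $g$ is a polynomial, which is absurd. Therefore $e^{h}\equiv1$, that is, $f(z+c)=f(z)$.

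\emph{Main obstacle: smallness of the coefficients.} Two difficulties arise, and I expect this step to be the hard one.
\begin{itemize}
\item \emph{The $\psi_j$.} Since $g$ has no zeros, the logarithmic derivative lemma gives $m(r,\psi_j)=S(r,g)$. However, $N(r,\psi_j)\le j\,\overline N(r,g)$, which need not be small for a meromorphic function, unlike the entire case of Theorem A. To handle this I would first use $(*)$ at the poles of $g$. The common poles of $g$ and $G$ must be matched in $(*)$, and computing leading Laurent coefficients there should give strong local information. I would aim to show either that $\overline N(r,g)=S(r,g)$, or that $e^{h}=1$ at every pole of $g$, which would feed into a counting argument.
\item \emph{The factor $e^{h}$.} This factor need not be small relative to $g$, especially for infinite order, where shifts are not controlled. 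My first attempt would be to avoid dividing through and instead regard $(*)$ as the quadratic relation
\[
L g\,e^{2h}+aL\,e^{h}-\psi_k(g+a)=0
\]
in $e^{h}$. I would then apply a Borel/Clunie-type argument, or Nevanlinna's second main theorem to $e^{h}$ using that it omits $0$ and $\infty$. The goal is to show that $e^{h}$ must be constant.
\end{itemize}
Once $e^{h}$ is known to be constant, $(*)$ immediately yields $e^{h}=1$ as above.
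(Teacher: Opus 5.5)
Your Step 1 and the algebra leading to $(*)$ are correct. The gap is that you never prove the coefficients of $(*)$ are small, or equivalently that $e^{h}$ is constant. This cannot be proved, because the target $e^{h}\equiv 1$, i.e.\ $f(z+c)=f(z)$, is false for meromorphic $f$.

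Take the paper's own example $f(z)=\frac{e^{e^z}+1}{e^{e^z}-1}$, which omits $a=1$, and let $c=\pi i$. Since $e^{e^{z+\pi i}}=e^{-e^z}$, we get $f(z+\pi i)=-f(z)$. Hence $f(z+\pi i)f^{(k)}(z+\pi i)=f(z)f^{(k)}(z)$, yet $f(z+\pi i)\neq f(z)$. In your notation, with $u=e^{e^z}$:
\begin{itemize}
\item $g=2/(u-1)$ and $G=-2u/(u-1)$, so $e^{h}=-u$, which is non-constant with $T(r,e^{h})=T(r,g)+O(1)$.
\item $e^{h}=-1$ at every pole of $g$, and $\overline N(r,g)=T(r,g)+S(r,g)$ by the second main theorem applied to $u$. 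So both alternatives in your first bullet fail as well.
\item $(*)$ holds in the form $g\psi_k(1+e^{h})=-2a\psi_k$, i.e.\ $e^{h}=-1-2a/g$, with a coefficient that is neither zero nor small.
\end{itemize}

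The missing idea is that once poles are allowed, the branch $f(z+c)=-f(z)$ genuinely occurs. It forces $f$ to omit both $a$ and $-a$. Picard's theorem forbids this for non-constant entire $f$, which is why the entire-case strategy of Theorem A lands on $F=f$. For meromorphic $f$ it is harmless.

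A correct argument must therefore aim at a dichotomy containing the branch $e^{h}\equiv-1-2a/g$ (giving period $2c$), not at constancy of $e^{h}$. For $k=1$ this is easy. Integrating gives $f^2(z+c)=f^2(z)+C$. If $C\neq0$, then $t=f(z+c)-f(z)$ satisfies $f=(C/t-t)/2$ and $f(z+c)=(C/t+t)/2$. So $t$ omits all roots of $t^2+2at-C$ and $t^2-2at+C$, which comprise at least three distinct nonzero values when $a\neq0$. By Picard $t$ is constant, a contradiction; hence $C=0$ and $f(z+c)=\pm f(z)$.

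The paper's proof does not fill your gap either. It also concludes $f(z+c)=f(z)$, via Lemma 2.1, without ever checking that lemma's counting-function hypothesis. In the example above, its $f_3=f^{(k)}(z)/f^{(k)}(z+c)$ is identically $-1$ and $f_2\not\equiv 1$, so the lemma's conclusion fails. Correspondingly its hypothesis fails: $f_1=-(f(z+c)-\eta)/\eta$ has $\overline N(r,f_1)=T(r,f)+S(r,f)$.
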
 In connection with the conjecture of Yang, Wang and Hu \cite{Wan & Hu & AMS & 2018} established the following result.
\begin{theoB}\cite{Wan & Hu & AMS & 2018}
	Let $ f $ be a transcendental entire function and $ k $ be a positive integer. If $ \left(f^2\right)^{(k)} $ is a periodic function, then $ f $ is also a periodic function. 
\end{theoB}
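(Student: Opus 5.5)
The natural route is to reduce the hypothesis on $\left(f^2\right)^{(k)}$ to a functional equation for $f$ and its shift, and then to settle that equation by a Borel-type argument.

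\textbf{Step 1: reduction to a polynomial difference.} Let $c\neq 0$ be a period of $\left(f^2\right)^{(k)}$. Then $G(z):=f^2(z+c)-f^2(z)$ is entire and satisfies $G^{(k)}\equiv 0$, so $G=p$ for some polynomial $p$ of degree at most $k-1$. Factoring gives
\begin{align*}
\bigl(f(z+c)-f(z)\bigr)\bigl(f(z+c)+f(z)\bigr)=p(z).
\end{align*}
If $p\equiv 0$, then one factor vanishes identically. Hence $f(z+c)=\pm f(z)$, and $f$ is periodic with period $c$ or $2c$.

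\textbf{Step 2: the case $p\not\equiv 0$.} Both factors are entire and their product is a polynomial, so each has only finitely many zeros. Thus we may write
\begin{align*}
f(z+c)-f(z)=p_1(z)e^{\alpha(z)},\qquad f(z+c)+f(z)=p_2(z)e^{-\alpha(z)},
\end{align*}
with $p_1p_2=p$, the $p_i$ non-zero polynomials, and $\alpha$ entire. Solving,
\begin{align*}
2f(z)=p_2(z)e^{-\alpha(z)}-p_1(z)e^{\alpha(z)},\qquad 2f(z+c)=p_2(z)e^{-\alpha(z)}+p_1(z)e^{\alpha(z)}.
\end{align*}
Since $f$ is transcendental, $\alpha$ is non-constant. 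Replacing $z$ by $z+c$ in the first identity and comparing with the second yields
\begin{align*}
p_1(z)e^{\alpha(z)}+p_2(z)e^{-\alpha(z)}+p_1(z+c)e^{\alpha(z+c)}-p_2(z+c)e^{-\alpha(z+c)}\equiv 0 .
\end{align*}
Multiplying by $e^{\alpha(z)}$ gives a relation of the form $\sum_j a_j e^{g_j}\equiv 0$. Here the coefficients are non-zero polynomials, and the exponents are $2\alpha(z)$, $0$, $\alpha(z)+\alpha(z+c)$ and $\alpha(z)-\alpha(z+c)$.

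\textbf{Step 3: Borel's theorem and the case analysis.} By Borel's theorem (polynomial coefficients are small with respect to $e^{g}$ for non-constant $g$, even when $\alpha$ is transcendental), some pair of exponents must differ by a constant. I would then reduce the identity group by group.
\begin{itemize}
\item \textbf{Shift-additive case.} Suppose $\alpha(z+c)-\alpha(z)\equiv d$ is constant. Collecting the coefficients of $e^{\alpha}$ and $e^{-\alpha}$ gives
\begin{align*}
p_1(z)+e^{d}p_1(z+c)\equiv 0,\qquad p_2(z)-e^{-d}p_2(z+c)\equiv 0 .
\end{align*}
Comparing leading coefficients forces $e^{d}=-1$ from the first relation and $e^{d}=1$ from the second, which is a contradiction.
\item \textbf{Shift-antisymmetric case.} Suppose $\alpha(z)+\alpha(z+c)$ is constant. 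Then $e^{-\alpha(z+c)}$ is a constant multiple of $e^{\alpha(z)}$, and similarly for $e^{\alpha(z+c)}$. Comparing coefficients of $e^{\pm\alpha}$ again gives polynomial shift identities whose leading coefficients force incompatible constants, or else $p_1p_2\equiv 0$.
\item \textbf{Remaining pairings.} The case $2\alpha$ constant is impossible. The other pairings reduce to the two cases above after regrouping.
\end{itemize}
In every case with $p\not\equiv 0$ we reach a contradiction, so $p\equiv 0$ and Step 1 concludes.

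I expect Step 3 to be the main obstacle. One must apply Borel's theorem carefully, since it permits partial cancellation among subgroups of terms, and every possible grouping of the four exponentials has to be tracked. I would first isolate the constant term $p_2(z)$, which has exponent $0$. It must be cancelled by a term whose exponent is constant. That forces either $\alpha(z)-\alpha(z+c)$ or $\alpha(z)+\alpha(z+c)$ to be constant, which places us in exactly the two cases above and keeps the analysis short.
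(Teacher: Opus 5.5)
Your proof is correct and follows essentially the same route as the paper's proof of Lemma~\ref{lem2.2}, which contains Theorem B as the special case $a_2=1$, $a_1=0$ (the paper itself only cites Theorem B). The steps match one for one: the same integration and factorization, the same representation of the two factors as polynomial times $e^{\pm\alpha}$, and your two cases ($\alpha(z+c)-\alpha(z)$ or $\alpha(z+c)+\alpha(z)$ constant) are exactly Cases B.1 and B.2, ending in the same leading-coefficient contradiction $p(z+c)=-p(z)$; the only difference is that you apply Borel's theorem to the four-term exponential identity, where the paper applies the three-function Lemma~\ref{lem2.1} to the normalized identity \eqref{e2.8}.
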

\begin{rem}\label{rem1.2}
	Theorem B establishes Yang's Conjecture for the case $k=1$. However, the theorem fails if $f^2$ is replaced by $f$. To see this, consider the function $f(z)=e^z+az+b$ with $a \neq 0$ and $b\in\mathbb{C}$; while $f$ itself is not periodic, its derivative $f^{(k)}$ is. Likewise, for $f(z)=\sin z+az+b$, the derivative $f^{(k)}$ is periodic despite $f$ being non-periodic.
\end{rem}
\begin{rem}\label{rem1.3}
	Wang and P. C. Hu \cite{Wan & Hu & AMS & 2018} also observed that, in \emph{Theorem B},\; one can replace by $ f^{n} $ provided $ n\geq 3 $. If $ \left(f^n\right)^{(k)} $ is periodic, then one can write 
	\begin{align}
		\label{eee1.4} f^n(z+c)=f^n(z)+p_{k-1}(z), 
	\end{align} where $ p_{k-1}(z) $ is a polynomial in $ z $ of degree at most $ k-1 $. Equation (\ref{eee1.4}) has no non-constant entire solutions if $ p_{k-1}(z)\not\equiv 0, $ which is a direct corollary of {C. C. Yang} \cite{Yan & PAMS & 1970} related to the \emph{Farmat type function equation} and which is : \emph{if $ m $, $ n $ are two positive integers satisfying $\displaystyle \frac{1}{m}+\frac{1}{n}<1 $, then there are no non-constant entire solution $ f(z) $ and $ g(z) $ that satisfy \begin{align}
			a(z)f^n(z)+b(z)g^m(z) =1,
		\end{align} where $ a(z), b(z)\in\mathcal{S}(f)$.}\vspace{1.2mm}
	
	Consequently, applying this result yields $p_{k-1}(z) \equiv 0$, which implies $f^n(z+c) = f^n(z)$. It follows that $f(z+c) = \theta f(z)$, where $\theta^n = 1$. Thus, $f$ is periodic with period $nc$.
\end{rem}
In view of Theorem B and Remark~\ref{rem1.3}, Wang and Hu deduced the following corollary.
\begin{cor}\cite{Wan & Hu & AMS & 2018}
	Let $ f $ be a transcendental entire function, $ k $ and $ n $ positive integers and $ n\geq 2 $. If $ \left(f^n\right)^{(k)} $ is periodic, then $ f $ also periodic.
\end{cor}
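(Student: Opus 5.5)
The plan is to split according to $n$: the case $n=2$ is handled by Theorem B, and the case $n\geq 3$ by the argument sketched in Remark~\ref{rem1.3}, which I will make precise.

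\emph{Case $n=2$.} This is exactly Theorem B. If $\left(f^2\right)^{(k)}$ is periodic, then $f$ is periodic, and nothing more is needed.

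\emph{Case $n\geq 3$.} Let $c\neq 0$ be a period of $\left(f^n\right)^{(k)}$. Put $F(z)=f^n(z+c)-f^n(z)$. Then $F$ is entire and $F^{(k)}\equiv 0$. Integrating $k$ times gives
\begin{align*}
	f^n(z+c)=f^n(z)+p_{k-1}(z),
\end{align*}
where $p_{k-1}$ is a polynomial of degree at most $k-1$. Suppose, for contradiction, that $p_{k-1}\not\equiv 0$. Dividing by $p_{k-1}$ gives
\begin{align*}
	a(z)g^n(z)+b(z)f^n(z)=1, \qquad g(z)=f(z+c),\quad a=\frac{1}{p_{k-1}},\quad b=-\frac{1}{p_{k-1}}.
\end{align*}
Both $f$ and $g$ are non-constant entire functions, since $f$ is transcendental. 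The coefficients $a$ and $b$ are rational, so $T(r,a)=O(\log r)=o(T(r,f))$. Hence $a,b\in\mathcal{S}(f)$, and they are also small with respect to $g$.

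Since $n\geq 3$, we have $\frac1n+\frac1n<1$. Yang's Fermat-type result quoted in Remark~\ref{rem1.3} then excludes such a pair $(f,g)$. Therefore $p_{k-1}\equiv 0$, that is, $f^n(z+c)=f^n(z)$.

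It remains to pass from $f^n$ to $f$. Let $h(z)=f(z+c)/f(z)$. This is meromorphic and satisfies $h^n\equiv 1$, so $h$ takes only the $n$-th roots of unity as values. A meromorphic function with values in a finite set is constant, hence $h\equiv\theta$ with $\theta^n=1$. Iterating $f(z+c)=\theta f(z)$ gives $f(z+nc)=\theta^n f(z)=f(z)$, so $f$ is periodic with period $nc$.

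\emph{Main obstacle.} The only delicate step is the appeal to Yang's theorem. One must check that its hypotheses hold with $m=n$: the coefficients $\pm 1/p_{k-1}$ are small functions, which is true because $f$ is transcendental. One must also check that $g=f(z+c)$ is non-constant, which is immediate. Everything else is routine.
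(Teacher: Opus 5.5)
Your proposal is correct and is essentially the paper's argument. The case $n=2$ is Theorem B, and for $n\geq 3$ you follow Remark~\ref{rem1.3}: integrate to $f^n(z+c)=f^n(z)+p_{k-1}(z)$, use Yang's Fermat-type theorem with $m=n$ to force $p_{k-1}\equiv 0$, then deduce $f(z+c)=\theta f(z)$ with $\theta^n=1$ and period $nc$, while supplying details the paper leaves implicit (why $f(z+c)/f(z)$ is constant and why $\pm 1/p_{k-1}$ are small functions).
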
 
More recently, by investigating Theorem B and Remark~\ref{rem1.3} within the class of transcendental entire functions, Liu and Yu \cite{Liu & Yu & BAMS & 2019} raised the following question.
\begin{ques}\label{q1.2}\cite{Liu & Yu & BAMS & 2019}
	Let $ f $ be a transcendental entire function and $ k $ be a positive integer. Let $ a_1, a_2,\ldots, a_n(\neq 0) $ be constants. If  $ \left(a_nf^n+\ldots+a_1f\right)^{(k)},$ $ n\geq 2 $, is a periodic function, is it true that $ f $ is also a periodic function. 
\end{ques} In \cite{Liu & Yu & BAMS & 2019}, Liu and Yu provided a partial solution to Question~\ref{q1.2} by establishing the following result.
\begin{theoC}\cite{Liu & Yu & BAMS & 2019}
	Let $ f $ be a transcendental entire function and $ k $ be a positive integer. Let $ a_1 $ and $a_2(\neq 0) $ be constants. If  $ \left(a_2f^2(z)+a_1f(z)\right)^{(k)},$ is a periodic function, then $ f $ is also a periodic function.
\end{theoC}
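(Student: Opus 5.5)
The plan is to reduce Theorem C to Theorem B by completing the square. Since $a_2\neq 0$, put
\begin{align*}
	F(z):=f(z)+\frac{a_1}{2a_2}.
\end{align*}
Then $F$ is again a transcendental entire function, and
\begin{align*}
	a_2f^2(z)+a_1f(z)=a_2F^2(z)-\frac{a_1^2}{4a_2}.
\end{align*}
The constant term disappears after one derivative, so for $k\geq 1$ we get
\begin{align*}
	\left(a_2f^2+a_1f\right)^{(k)}=a_2\left(F^2\right)^{(k)}.
\end{align*}
The hypothesis therefore says exactly that $(F^2)^{(k)}$ is periodic. Multiplication by the nonzero constant $1/a_2$ preserves periodicity and the period $c$.

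Next, apply Theorem B to the transcendental entire function $F$ with the same integer $k$. This gives that $F$ is periodic, say $F(z+c')=F(z)$. Since $f=F-\frac{a_1}{2a_2}$ differs from $F$ by a constant, $f(z+c')=f(z)$ as well, which proves Theorem C.

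There is no real obstacle once Theorem B is available; the only point to check is that the translation by a constant keeps $F$ transcendental and entire, which is immediate. If one wanted to avoid citing Theorem B, I would argue directly as follows. Integrate $k$ times to get
\begin{align*}
	F^2(z+c)-F^2(z)=q(z),
\end{align*}
with $q$ a polynomial. Factor the left side as
\begin{align*}
	\bigl(F(z+c)-F(z)\bigr)\bigl(F(z+c)+F(z)\bigr)=q(z).
\end{align*}
The two factors are entire with polynomial product, so each equals a polynomial times $e^{\pm h}$ for some entire $h$. One would then show via Nevanlinna theory (comparing $T(r,F)$ with $T(r,e^h)$, using $T(r,F(z+c))\sim T(r,F)$ for finite order, or a separate growth argument otherwise) that $q\equiv 0$. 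That gives $F(z+c)=\pm F(z)$, so $F$, and hence $f$, has period $2c$. The hard step of that direct route is excluding $q\not\equiv 0$, which is precisely the content of Theorem B, so invoking Theorem B is the intended route.
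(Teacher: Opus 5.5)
Your proof is correct, and it takes a genuinely different route from the paper. The paper does not prove Theorem C itself, since it is quoted from Liu--Yu. Its own argument is the meromorphic generalization, Theorem~\ref{th1.2} via Lemma~\ref{lem2.2}, and that argument never invokes Theorem B. There the author integrates $k$ times and factors $\Psi_{f(z+c)}-\Psi_{f(z)}=[f(z+c)-f(z)][a_2f(z+c)+a_2f(z)+a_1]=\mathcal{Q}_{k-1}(z)$; in your notation this is exactly $a_2\bigl(F(z+c)-F(z)\bigr)\bigl(F(z+c)+F(z)\bigr)$. The case $\mathcal{Q}_{k-1}\equiv 0$ gives period $c$ or $2c$. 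For $\mathcal{Q}_{k-1}\not\equiv 0$, the paper writes the factors as $\mathcal{R}_1e^{g}$ and $\mathcal{R}_2e^{-g}$ and solves for $f(z)$ and $f(z+c)$. It then shifts the formula for $f(z)$ by $c$ and applies the Borel-type Lemma~\ref{lem2.1} to the resulting three-term identity \eqref{e2.8}. This forces $\mathcal{Q}_{k-1}(z+c)=-\mathcal{Q}_{k-1}(z)$, which is impossible for a nonzero polynomial. So your fallback ``direct route'' is essentially the paper's proof. The step you left vague (excluding $q\not\equiv 0$) is handled there by this exponential identity rather than by a growth comparison, and no finite-order assumption is needed. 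Your reduction buys brevity and a structural insight: completing the square shows the linear term $a_1f$ is cosmetic, so Theorem C is equivalent to Theorem B (its case $a_1=0$) and needs no new analysis. The paper's route buys self-containment and the explicit period $c$ or $2c$. It is also an argument that can at least be attempted for meromorphic $f$, where Theorem B is not available as a black box. Note, though, that only in the entire case is the factorization into polynomials times $e^{\pm g}$ automatic; for meromorphic $f$, poles of one factor can be cancelled by zeros of the other.
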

\begin{rem}
	The subsequent example demonstrates that Theorem C remains valid for the class of transcendental meromorphic functions.
\end{rem}
\begin{exm}
	Let $f(z)=\frac{\sin z}{\sin z-1}$. Then we see that
	\begin{align}
		a_2f^2(z)+a_1f(z)=\frac{(a_2+a_1)\sin ^2 z - a_1\sin z}{\left(\sin z-1\right)^2},
		\end{align}
	is a periodic function. Furthermore, both $f$ and the $k$-th derivative $(a_2f^2(z)+a_1f(z))^{(k)}$ are periodic.
\end{exm}
In light of these observations, the following problem naturally arises for further investigation of Theorem C.
\begin{ques}\label{q1.3}
If $f$ is a transcendental meromorphic function, does Theorem C remain valid?
\end{ques}
We answer Question~\ref{q1.3} in the affirmative by establishing the following result.
\begin{thm}\label{th1.2}
	Let $ f $ be a transcendental meromorphic function and $ k $ be a positive integer. Let $ a_1 $ and $a_2(\neq 0) $ be constants. If  $ \left(a_2f^2(z)+a_1f(z)\right)^{(k)},$ is a periodic function, then $ f $ is also a periodic function.
\end{thm}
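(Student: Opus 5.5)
Throughout, let $c\neq 0$ be a period of $\left(a_2f^2+a_1f\right)^{(k)}$. The plan is to reduce the hypothesis to a difference equation for a square, exactly as in Remark~\ref{rem1.3}, and then to rule out the non-homogeneous case. I expect only partial success on that last step for general meromorphic $f$.

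\textbf{Reduction.} Set $F=a_2f^2+a_1f$. Since $F^{(k)}(z+c)-F^{(k)}(z)\equiv 0$, integrating $k$ times gives
\begin{align*}
F(z+c)-F(z)=p(z),
\end{align*}
where $p$ is a polynomial of degree at most $k-1$. Complete the square by putting $g=f+\frac{a_1}{2a_2}$. Then $F=a_2g^2-\frac{a_1^2}{4a_2}$, and $g$ is transcendental meromorphic. The equation becomes
\begin{align*}
\bigl(g(z+c)-g(z)\bigr)\bigl(g(z+c)+g(z)\bigr)=q(z), \qquad q=p/a_2 .
\end{align*}
If $q\equiv 0$, then $g(z+c)=\pm g(z)$, since $g(z+c)^2=g(z)^2$ and the sign is constant by the identity theorem. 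Hence $g(z+2c)=g(z)$, so $f$ is periodic with period $2c$. Everything therefore reduces to showing that $q\not\equiv 0$ is impossible.

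\textbf{Ruling out $q\not\equiv 0$.} Put $u=g(z+c)-g(z)$ and $v=g(z+c)+g(z)$, so that $uv=q$. This gives
\begin{align*}
g(z)=\tfrac12\Bigl(\tfrac{q(z)}{u(z)}-u(z)\Bigr),\qquad g(z+c)=\tfrac12\Bigl(\tfrac{q(z)}{u(z)}+u(z)\Bigr).
\end{align*}
Comparing the first formula shifted by $c$ with the second yields the first-order difference equation
\begin{align*}
\frac{q(z+c)}{u(z+c)}-u(z+c)=\frac{q(z)}{u(z)}+u(z).
\end{align*}

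\emph{Entire case.} If $f$ is entire, then $u$ and $v$ are entire with product the polynomial $q$. Hence $u=r_1e^{\alpha}$ and $v=r_2e^{-\alpha}$ with $r_1,r_2$ polynomials and $\alpha$ entire and non-constant. Substituting into the difference equation and applying Borel's lemma to the exponentials $e^{\pm\alpha(z)}$ and $e^{\pm\alpha(z+c)}$ forces $\alpha(z+c)-\alpha(z)$ to be constant. It also forces coefficient identities among $r_1,r_2$ and their shifts, and these are incompatible unless $q\equiv0$. This is the argument behind Theorem C, which we may simply invoke.

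\emph{Meromorphic case.} First study poles. If $z_0$ is a pole of $g$ of order $m$, then $g^2(z+c)=g^2(z)+q$ shows that $z_0+c$ and $z_0-c$ are also poles of order $m$. Moreover, the principal parts of $g^2$ at $z_0$ and $z_0+c$ agree. Poles of $u$ occur only at zeros of $v$, and vice versa, apart from the finitely many zeros of $q$.

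I would then apply the second main theorem to $u$, or equivalently to $G=g^2$, which satisfies $G(z+c)=G(z)+q$. The aim is to show that away from a finite set, $u$ essentially omits $0$ and $\infty$, up to counting functions that are $S(r,u)$. This would allow writing $u=Re^{\alpha}$ with $R$ a small function. Once that representation is available, the Borel-type argument from the entire case goes through with small-function coefficients and yields $q\equiv 0$.

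\textbf{Main obstacle.} The hardest step is controlling the poles of $u$ and $v$, which cancel in the product $uv=q$. The Fermat equation $x^2-y^2=1$ does have non-constant meromorphic solutions, so any contradiction must come from the shift structure rather than from the Fermat relation alone. My first attempt will be to use the pole propagation along $z_0+\mathbb{Z}c$ together with the condition on principal parts at $z_0$ and $z_0+c$. For finite order $f$, I would use the difference analogue of the logarithmic derivative lemma to estimate $N(r,u)$ and $N(r,1/u)$ by $S(r,g)$. For infinite order, I expect to need the version of that lemma valid off an exceptional set.
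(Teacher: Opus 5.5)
\textbf{Where the gap is.} Your reduction to $(g(z+c)-g(z))(g(z+c)+g(z))=q$ is sound, and so are the case $q\equiv 0$ and the entire case. In the entire case Borel's lemma actually leaves two alternatives, $\alpha(z+c)-\alpha(z)$ constant or $\alpha(z+c)+\alpha(z)$ constant. Each gives $q(z+c)=-q(z)$, which is impossible for a nonzero polynomial.

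The genuine gap is the meromorphic case with $q\not\equiv 0$, which is the only thing Theorem~\ref{th1.2} adds to Theorem C. There you state an aim, namely that $u$ essentially omits $0$ and $\infty$ so that $u=Re^{\alpha}$, but you give no argument. The tools you list do not obviously deliver it. The difference logarithmic-derivative lemma needs finite order (or hyper-order less than one). Nothing in your sketch stops infinitely many poles of $u$ from cancelling against zeros of $v$.

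The paper does not resolve this either. In Case B of the proof of Lemma~\ref{lem2.2} it simply asserts that $f(z+c)-f(z)=\mathcal{R}_1e^{g}$ and $a_2f(z+c)+a_2f(z)+a_1=\mathcal{R}_2e^{-g}$ with rational $\mathcal{R}_1,\mathcal{R}_2$. That is exactly the step you correctly flagged as not automatic for meromorphic $f$. After it, the paper runs the same Borel-type argument (via Lemma~\ref{lem2.1}) as your entire case.

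\textbf{The missing idea.} It is elementary and grows out of your pole-propagation remark: show that $g$ has no poles. Put $G=g^2$ and choose a polynomial $P$ with $P(z+c)-P(z)=q(z)$. Then $\deg P=\deg q+1\geq 1$ and $G-P$ is $c$-periodic, so $G(z_0+nc+\zeta)=G(z_0+\zeta)+A_n(\zeta)$ with $A_n(\zeta)=P(z_0+nc+\zeta)-P(z_0+\zeta)$.

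Suppose $z_0$ were a pole of $g$. Then $G(z_0+\zeta)=a\zeta^{-2m}(1+O(\zeta))$ with $m\geq 1$. Let $\lambda_n=A_n(0)$, so $|\lambda_n|\asymp n^{\deg q+1}\to\infty$. On $|\zeta|\leq\delta$ one has $A_n=\lambda_n(1+o(1))$ and $A_n'=O(n^{\deg q})=o(|\lambda_n|)$.

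On $|\zeta|=\delta$ the term $A_n$ dominates $G(z_0+\zeta)$, and $A_n$ has no zeros or poles inside. By the argument principle, $\zeta\mapsto G(z_0+nc+\zeta)$ therefore has as many zeros as poles in $|\zeta|<\delta$, namely $2m$. At each such zero, $|G(z_0+\zeta)|\asymp|\lambda_n|$. Hence $|\zeta|\asymp|\lambda_n|^{-1/(2m)}$ and $|G'(z_0+\zeta)|\asymp|\lambda_n|^{1+1/(2m)}$, which dominates $|A_n'|$.

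So these are simple zeros of $G=g^2$, which is absurd because zeros of a square have even multiplicity. Therefore $g$, and hence $f$, is entire. This also justifies the factorization the paper takes for granted. Your entire-case argument then forces $q\equiv 0$, and $f$ has period $c$ or $2c$.
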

\begin{rem}
	It follows from Theorem~\ref{th1.2} that the fundamental period of the function $f$ is either $c$ or $2c$.
\end{rem}
\begin{ques}\label{q1.4}
	If we consider the operator $(a_nf^n(z)+a_1f(z))^{(k)}$ in Theorem \ref{th1.2} for an integer $n \geq 3$, does the same conclusion hold for a transcendental meromorphic function?  
\end{ques}
Focusing exclusively on the case $n=3$, we successfully answer Question \ref{q1.4} with the following result.
\begin{thm}\label{th1.3}
	Let $ f $ be a transcendental meromorphic function and $ k $ be a positive integer. Let $ b_1 $ and $b_3(\neq 0) $ be constants. If  $ \left(b_3f^3(z)+b_1f(z)\right)^{(k)},$ is a periodic function, then $ f $ is also a periodic function.
\end{thm}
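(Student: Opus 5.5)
Write $g=b_3f^3+b_1f$ and let $c\neq 0$ be a period of $g^{(k)}$. Since $\bigl(g(z+c)-g(z)\bigr)^{(k)}\equiv 0$ and $g(z+c)-g(z)$ is meromorphic, it is a polynomial $p$ of degree at most $k-1$. Put $F(z)=f(z+c)$. Factoring the difference of cubes gives
\begin{align*}
(F-f)\bigl[b_3(F^2+Ff+f^2)+b_1\bigr]=p .
\end{align*}
I will split into the cases $p\equiv 0$ and $p\not\equiv 0$.

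\emph{Case $p\equiv 0$.} One of the two factors vanishes identically. If $F\equiv f$, then $f$ has period $c$.

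Otherwise $F^2+Ff+f^2\equiv a:=-b_1/b_3$. Let $\omega$ be a primitive cube root of unity, so that $(F-\omega f)(F-\bar\omega f)=a$.

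If $a=0$, then $F=\omega f$ or $F=\bar\omega f$, hence $f(z+3c)=f(z)$.

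If $a\neq0$, set $h=F-\omega f$. This is a meromorphic function, not identically zero, and $F-\bar\omega f=a/h$. Solving the linear system gives
\begin{align*}
f=\frac{h-a/h}{\bar\omega-\omega},\qquad F=h+\omega f .
\end{align*}
Substituting into $F(z)=f(z+c)$ and writing $H=h(z+c)$, a direct computation gives $H-a/H=\bar\omega h-\omega a/h$. This is a quadratic in $H$ with roots $\bar\omega h$ and $-a/(\bar\omega h)$. By the identity theorem, one of them holds identically.
\begin{itemize}
\item If $H=\bar\omega h$, then $h$, and hence $f$, has period $3c$.
\item If $H=-a/(\bar\omega h)$, applying this twice gives $h(z+2c)=h(z)$, so $f$ has period $2c$.
\end{itemize}
Thus $f$ is periodic in all subcases.

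\emph{Case $p\not\equiv0$.} I must show this is impossible for transcendental $f$, and I expect this to be the main obstacle. Put $u=F-f$ and $Q=b_3(F^2+Ff+f^2)+b_1$, so that $uQ=p$.

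First I analyse poles. At a pole of $f$ of order $m$ which is not a pole of $F$, $Q$ has a pole of order $2m$, so $u$ would need a zero there, which is absurd. Hence the poles of $f$ are (up to finitely many exceptions) propagated along $z_0+c\mathbb{Z}$, and at each such point $u$ must vanish to order $2m$ to cancel the pole of $Q$. This can only happen finitely often, because the zeros of $u$ come from zeros of $p$ or from these cancellations. Together, these give $N(r,u)=O(\log r)$ and $N(r,1/u)=O(\log r)+N(r,f)$-type bounds.

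Next, eliminating $F=f+u$ gives
\begin{align*}
3b_3\Bigl(f+\tfrac u2\Bigr)^2=\frac{p}{u}-b_1-\frac{b_3u^2}{4}.
\end{align*}
Hence $T(r,f)=\tfrac12 T\bigl(r,p/u-b_3u^2/4\bigr)+S(r,f)$, which ties the growth of $f$ rigidly to that of $u$.

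Finally I substitute back into the shifted equation $f(z+c)=f(z)+u(z)$. I would aim to show that the right-hand side $R(u)=p/u-b_1-b_3u^2/4$ has to be a perfect square of a meromorphic function with few zeros and poles. Applying the second main theorem to $u$ at the roots of $R$ (where $f+u/2$ must vanish to even order) should then force $u$ to be a small function, in fact rational. But then $f+u/2$ is a square root of a rational function, so $f$ is rational, contradicting transcendence.

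If this counting is too lossy, my fallback is to treat $P(F)=P(f)+p$ with $P(x)=b_3x^3+b_1x$ as a cubic curve over the field of small functions. For $p\not\equiv0$ this curve is nonsingular of genus one, and I would invoke a Yang-type Fermat result analogous to the one quoted in Remark~\ref{rem1.3} to rule out transcendental meromorphic solutions, taking care with the shift relation between $F$ and $f$.
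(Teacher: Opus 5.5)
Your case $p\equiv 0$ is correct and complete. The factorisation $(F-\omega f)(F-\bar\omega f)=a$ and the quadratic $(H-\bar\omega h)(H+a\omega/h)=0$ give the periods $c$, $3c$, $2c$ cleanly. The paper handles this case differently: it shifts $b_3(F^2+Ff+f^2)+b_1=0$ by $c$ and subtracts, obtaining $[f(z+2c)-f(z)][f(z+2c)+f(z+c)+f(z)]=0$.

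The gap is the case $p\not\equiv 0$. This case carries the whole content of the theorem, and you only sketch it; the sketch does not work.

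(i) Your pole analysis is incomplete. Take a common pole of order $m$ with leading coefficients $A$ (of $F$) and $B$ (of $f$). Then $u$ vanishes to order $\ge 2m$ only if $A=B$. If $A=\omega B$ or $A=\bar\omega B$, then $u$ has a pole of order $m$ and $Q$ has a compensating zero, because $F-\omega f$ (resp. $F-\bar\omega f$) vanishes there. Nothing in your argument stops such points occurring infinitely often. The remark that ``the zeros of $u$ come from zeros of $p$ or from these cancellations'' is circular, so $N(r,u)=O(\log r)$ is unsupported.

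(ii) More seriously, the perfect-square plus second-main-theorem step cannot force $u$ to be small. With $w=u(f+u/2)$ your identity reads
\[
3b_3w^2=pu-b_1u^2-\tfrac{b_3}{4}u^4 ,
\]
which for constant $p\neq0$ is generically an elliptic curve. Put $\rho_0=0$ and let $\rho_1,\rho_2,\rho_3$ be the roots of $b_3u^3+4b_1u-4p=0$. Knowing that $u$ takes these four values only with even multiplicity gives
\[
2T(r,u)\le\tfrac12\sum_{j=0}^{3} N\bigl(r,1/(u-\rho_j)\bigr)+S(r,u)\le 2T(r,u)+S(r,u).
\]
This is exactly the equality case, realised by $\wp$-type elliptic functions composed with entire functions. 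The step never uses $F(z)=f(z+c)$, and without that relation there are plenty of transcendental meromorphic pairs $(F,f)$ with $P(F)=P(f)+p$.

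Your fallback fails for the same reason. Genus-one curves do carry transcendental meromorphic points; the Fermat cubic $f^3+g^3=1$ has non-constant meromorphic solutions built from $\wp$. Yang's theorem in Remark~\ref{rem1.3} concerns entire functions only. For special constant $p$ the curve is not even of genus one: for $P(x)=x^3-3x$ and $p=4$ it has a node at $(-1,1)$.

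What is missing is an argument that genuinely uses the shift. For constant $p$ and a smooth curve $E$, one way is as follows.
\begin{itemize}
\item Lift $z\mapsto(f(z+c),f(z))$ to $\gamma:\mathbb{C}\to E$. Then $z\mapsto(\gamma(z),\gamma(z+c))$ lies in the correspondence $\{(\xi_1,\xi_2): X(\xi_1)=Y(\xi_2)\}\subset E\times E$.
\item Let $\pm v_0$ be the critical values of $P$. The branch values of $X$ and $Y$ on $E$ are $P^{-1}(p\pm v_0)$ and $P^{-1}(-p\pm v_0)$, and these sets differ when $p\neq0$.
\item A genus-one component would have unramified projections to $E$, forcing these branch-value sets to coincide. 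So every component has genus $\ge 2$, and Picard's theorem forces $f$ to be constant.
\item Non-constant $p$ would need a Nevanlinna-theoretic substitute for this argument.
\end{itemize}

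The paper's Case 2 does not supply this either. Its factorisation $f(z+c)-f(z)=\mathcal{R}_1e^{h}$ with rational $\mathcal{R}_1$ is justified only for entire $f$, since for meromorphic $f$ zeros and poles of $u$ can be cancelled by $Q$. Its final inference from ``$\mathcal{D}$ is not periodic'' is not a valid argument.
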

\begin{rem}
	From Theorem \ref{th1.3}, we observe that the exact period of the function $f$ can only be $c, 2c,$ or $3c$.
\end{rem}
We remark that the notion of a uniqueness polynomial for entire functions (UPE) can be applied to study the periodicity of entire or meromorphic functions. Recall that a polynomial $P$ is called a UPE if for any two non-constant entire functions $f$ and $g$, the identity $P(f) \equiv P(g)$ implies $f \equiv g$. In connection with uniqueness polynomials, Li and Yang \cite{Li & Yan & PJAS & 2006} established the following result.
\begin{theoD}\cite{Li & Yan & PJAS & 2006}
	Let $ a_i, b_i,\; (i=1, 2) $ and $ c $ be meromorphic functions non of them is identically zero. Let $ n $, $ m(\geq 2) $ be positive integers and relatively prime to each other, and $ n>2m+3 $, then the following equation \begin{align}
		F^n+a_1F^{n-m}+b_1=c\left(G^n+a_2G^{n-m}+b_2\right)
	\end{align}  has a pair of admissible solution $ (F, G) $, if $ c=b_1/b_2 $ and $ F=Gh $, where $ h $ is a meromorphic function satisfying $ h^n=c $ and $ h^m=a_1/a_2 $.   
\end{theoD}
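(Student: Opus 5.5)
The plan is to verify sufficiency by direct substitution and then check admissibility with elementary Nevanlinna estimates. The hypotheses of Theorem D, as worded, supply a meromorphic $h$ with $h^n=c$ and $h^m=a_1/a_2$, where $c=b_1/b_2$. I read ``admissible'' as meaning that $a_i,b_i,c$ are small functions with respect to $F$ and $G$, that is, $T(r,\cdot)=S(r,F)$ and $T(r,\cdot)=S(r,G)$. The claim I will prove is then the following: if $G$ is any meromorphic function admissible with respect to these coefficients and $F:=Gh$, then $(F,G)$ solves the equation and is admissible. The hypotheses $n,m$ coprime and $n>2m+3$ are not needed for this direction; they matter only for the converse, which I do not treat here.

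\emph{Step 1 (the identity).} Since $h^m=a_1/a_2\not\equiv 0$, the function $h$ is not identically zero, so I may write $h^{n-m}=h^n/h^m$. Substituting $F=Gh$ gives
\begin{align*}
F^n+a_1F^{n-m}+b_1 &= h^nG^n + a_1\frac{h^n}{h^m}G^{n-m} + b_1\\
&= cG^n + a_1\cdot c\cdot\frac{a_2}{a_1}\,G^{n-m} + c\,b_2\\
&= c\left(G^n+a_2G^{n-m}+b_2\right).
\end{align*}
The middle line uses $h^n=c$, $h^m=a_1/a_2$ and $b_1=cb_2$. This is exactly the required equation, holding identically.

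\emph{Step 2 (admissibility of $F$).} From $h^n=c$ and the first main theorem, $nT(r,h)=T(r,c)+O(1)$, so $T(r,h)=S(r,G)$. Then $F=Gh$ and $G=F/h$ give
\[
T(r,F)\le T(r,G)+T(r,h)
\qquad\text{and}\qquad
T(r,G)\le T(r,F)+T(r,1/h).
\]
Together these yield $T(r,F)=T(r,G)+S(r,G)$. Hence $S(r,F)=S(r,G)$, and every coefficient $a_i,b_i,c$ that is small for $G$ is also small for $F$. So $(F,G)$ is an admissible pair.

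\emph{Main obstacle.} The only delicate point is existence: one must exhibit at least one $G$ that is admissible for the given coefficients. If the coefficients are constants, or more generally of finite order, I would take $G=e^{e^z}$, whose characteristic dominates $T(r,\cdot)$ of every finite-order coefficient. In general I would argue that for any countable family of meromorphic coefficients there is an entire $G$ growing faster than all of them, for instance via a lacunary power series chosen so that $\log M(r,G)$ eventually dominates each $T(r,a_i)$, $T(r,b_i)$, $T(r,c)$. With such a $G$, Steps 1 and 2 produce the admissible solution $(Gh,G)$, which proves Theorem D as stated.
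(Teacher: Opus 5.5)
The paper gives no proof of Theorem D; it is quoted from Li--Yang. As a check of the statement exactly as printed, your argument is fine. Step 1 is the correct computation $a_1h^{n-m}=a_1h^n/h^m=ca_2$. Step 2 correctly transfers smallness from $G$ to $F$; note that $T(r,h^n)=nT(r,h)$ holds exactly.

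The existence step needs one repair. It is $T(r,G)$, not $\log M(r,G)$, that must dominate the coefficients, and by a factor tending to infinity. For example, take an entire $G$ with $\log M(r/2,G)\ge r\max\{1,T(r,a_1),\dots,T(r,c)\}$ and use $T(r,G)\ge\frac13\log M(r/2,G)$.

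The genuine gap is that you have proved the trivial half, and not the half the paper relies on. The printed ``has \dots\ if'' is a truncation of Li--Yang's result. Its content is the converse: \emph{every} admissible solution forces $b_1=cb_2$ and $F=hG$ with $h^n=c$ and $h^m=a_1/a_2$. That is how Theorem D is used in Case~2 of the proof of Theorem~\ref{th1.4}, where $\Psi_f(z)=h\Psi_f(z+c)$ with $h^n=1$ is \emph{deduced} from an identity of the given shape. Your argument gives nothing there. Your own observation that $\gcd(m,n)=1$ and $n>2m+3$ are never used is the symptom.

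What is missing is a second-main-theorem argument for necessity. One way to do it runs as follows.

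\emph{First, $b_1=cb_2$.} Suppose $d:=cb_2-b_1\not\equiv0$. Apply the second main theorem with small targets $0,-d,\infty$ to $\Phi:=cG^{n-m}(G^m+a_2)$, for which $\Phi+d=F^{n-m}(F^m+a_1)$. We have $T(r,\Phi)=nT(r,G)+S(r,G)$ and $T(r,F)=T(r,G)+S(r,G)$. Counting poles of $G$ and distinct zeros of $G$, $G^m+a_2$, $F$ and $F^m+a_1$ then gives $nT(r,G)\le(2m+3)T(r,G)+S(r,G)$, which is impossible.

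\emph{Second, $h^n\equiv c$.} Put $h:=F/G$, so that $G^m(h^n-c)=ca_2-a_1h^{n-m}$. Suppose $h^n\not\equiv c$.
\begin{itemize}
\item $h$ is not small, since otherwise $G$ would be small.
\item Generic zeros of $h^n-c$ are poles of $G^m$, so they have multiplicity at least $m$.
\item Apart from zeros and poles of the coefficients, the remaining zeros lie on $h=c^u(a_1/a_2)^v$ with $un+vm=1$. This is where coprimality is convenient.
\end{itemize}
The second main theorem for $h^n$ with targets $0,c,\infty$ then gives $n(1-1/m)\le3$, which contradicts $n>2m+3$. Hence $h^n\equiv c$, and then $a_1h^{n-m}=ca_2$ yields $h^m=a_1/a_2$.
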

\par As a corollary of {Theorem D}, {Li and Yang} \cite{Li & Yan & PJAS & 2006} obtained the following result.
\begin{theoE}\cite[Corollary 1]{Li & Yan & PJAS & 2006}
	Given integers, $ m$ and $ n $ with $ n>2m+3 (m\geq 2) $, relatively prime to each other, and rational functions $ a_1, a_2, a_3 $ and $ a_4(\not\equiv 0) $, the following functional equation 
	\begin{align}
		F^n+a_1F^{n-m}+a_2G^n+a_3G^{n-m}+a_4=0
	\end{align} has no transcendental meromorphic solution $ F $ and $ G $.   
\end{theoE}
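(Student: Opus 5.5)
We derive Theorem E from Theorem D by a reduction: we rewrite the equation in the two-sided form of Theorem D, so that its conclusion $c=b_1/b_2$ becomes incompatible with $a_4\not\equiv 0$. Suppose, for a contradiction, that $F$ and $G$ are transcendental meromorphic functions satisfying
\begin{align*}
F^n+a_1F^{n-m}+a_2G^n+a_3G^{n-m}+a_4=0 .
\end{align*}
Since all coefficients are rational and $F,G$ are transcendental, every $a_i$ satisfies $T(r,a_i)=O(\log r)=o(T(r,F))$ and $o(T(r,G))$. Hence the coefficients are small functions, and $(F,G)$ is an admissible pair in the sense of Theorem D.

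\emph{Main case: $a_1,a_2,a_3\not\equiv 0$.} Split $a_4$ as
\begin{align*}
b_1=\tfrac{1}{2}a_4, \qquad b_2=\tfrac{a_4}{2a_2},
\end{align*}
so that $b_1+a_2b_2=a_4$ and $b_1,b_2\not\equiv 0$. Put $c=-a_2$ and $\tilde a_2=a_3/a_2$. The equation then reads
\begin{align*}
F^n+a_1F^{n-m}+b_1=c\left(G^n+\tilde a_2G^{n-m}+b_2\right),
\end{align*}
with all five coefficients $a_1,b_1,\tilde a_2,b_2,c$ meromorphic and not identically zero. The hypotheses on $m,n$ (coprime, $m\ge 2$, $n>2m+3$) are exactly those of Theorem D. Theorem D then forces $c=b_1/b_2$, that is, $-a_2b_2=b_1$, which gives $a_4=b_1+a_2b_2\equiv 0$. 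This contradicts $a_4\not\equiv 0$. (It also yields $F=Gh$ with $h^n=c$ and $h^m=a_1/\tilde a_2$, but we do not need this.)

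\emph{Degenerate cases: some of $a_1,a_2,a_3$ vanish identically.} Here Theorem D does not apply directly, and I expect this to be the main obstacle; I would argue with the second main theorem.

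If $a_2\equiv a_3\equiv 0$, then $F^{n-m}(F^m+a_1)=-a_4$. Consequently $F$ has only finitely many zeros and poles, and the remaining small-function targets of the second main theorem are violated since $n>2m+3$.

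If $a_2\equiv 0$ but $a_3\not\equiv 0$, consider $P(F):=F^n+a_1F^{n-m}+a_4=-a_3G^{n-m}$. The zeros of $P(F)$ are then of multiplicity at least $n-m$ off a small set. The polynomial $P(w)+a_4-a_4$ has at least $m+1$ simple roots, since its derivative has roots only at $0$ and at the $m$-th roots of $-(n-m)a_1/n$. The second main theorem applied to these roots gives
\begin{align*}
(m+1)T(r,F)\le \tfrac{n}{n-m}\,T(r,F)+\overline N(r,F)+S(r,F),
\end{align*}
which contradicts $n>2m+3$. The cases $a_1\equiv0$ or $a_3\equiv 0$ with $a_2\not\equiv 0$ are handled the same way, using the symmetric ramification count (or Yang's Fermat-type result quoted in Remark~\ref{rem1.3}). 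In each sub-case, the inequality $n>2m+3$ is exactly what makes the multiplicity count fail.
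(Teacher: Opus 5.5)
The paper gives no proof of Theorem E. It only records it as Corollary~1 of Li--Yang, obtained from Theorem D. Your main case is exactly that derivation, and it is correct. With $b_1=a_4/2$, $b_2=a_4/(2a_2)$, $c=-a_2$ and $\tilde a_2=a_3/a_2$, all five coefficients are nonzero rational functions, hence small with respect to the transcendental $F$ and $G$. The condition $cb_2=b_1$ is then equivalent to $a_4=b_1+a_2b_2\equiv 0$, and this holds for every such splitting of $a_4$.

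One point should be made explicit. You are using the \emph{necessity} half of Theorem D: an admissible pair forces $c=b_1/b_2$. As printed in the paper, Theorem D says only ``if''. You should therefore cite the ``only if'' direction from Li--Yang, since that is what the corollary rests on.

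The degenerate cases are where your proposal is not yet a proof. Theorem D does not reach them, because it needs every coefficient $\not\equiv 0$. If Theorem E is read, as its derivation from Theorem D requires, with $a_1,a_2,a_3\not\equiv 0$ as well, your main case is the whole proof and you can drop this part. Under your broader reading (only $a_4\not\equiv 0$), the sketch has the following concrete defects.
\begin{enumerate}
\item \emph{The targets are not meromorphic.} For non-constant rational $a_1,a_4$, the roots of $P(w)=w^n+a_1w^{n-m}+a_4$ are branches of algebraic functions, not meromorphic functions on $\mathbb{C}$. So ``the second main theorem applied to these roots'' is not available as stated. You need Nevanlinna theory on a finite branched covering, or some other device. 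Even for meromorphic moving targets, a truncated second main theorem with more than three targets is Yamanoi's theorem, not the classical one. Three simple roots would already suffice, giving $T(r,F)\le \frac{3}{n-m}T(r,F)+S(r,F)$, once the targets are made single-valued.
\item \emph{Off-by-one in the count.} With $q$ finite targets together with $\infty$, the left side is $(q-1)T(r,F)$. So $q=m+1$ gives $mT(r,F)$, not $(m+1)T(r,F)$. For $m=2$ your inequality is then not contradictory. To fix this, either bound $\overline{N}(r,F)$ using that poles of $F$ have multiplicity at least $n-m$ (because $\gcd(n,n-m)=1$), or use more roots.
\item \emph{The simple-root count is unjustified.} The location of the critical points alone yields only $n-2m$ simple roots, which can be fewer than $m+1$ (e.g.\ $m=5$, $n=14$). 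To get more, note that at a critical point $P(w)=\frac{ma_1}{n}w^{n-m}+a_4$. Since $\gcd(m,n-m)=1$, $w^{n-m}$ takes $m$ distinct values at the $m$ critical points, so $P$ has at most one double root and hence at least $n-2$ simple roots.
\item \emph{Yang's result does not apply.} The result of Yang quoted in Remark~\ref{rem1.3} is for entire functions. For the Fermat subcase $a_1\equiv a_3\equiv 0$, apply the second main theorem to $-F^n/a_4$ with the values $0,1,\infty$ instead; this gives $nT(r,F)\le 3T(r,F)+S(r,F)$.
\item \emph{The case $a_2\equiv a_3\equiv 0$ is simpler than your sketch.} Valiron--Mohon'ko gives $nT(r,F)=O(\log r)$ directly.
\end{enumerate}
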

By combining Theorems D and E concerning the periodicity of entire functions, Liu and Yu \cite{Liu & Yu & BAMS & 2019} obtained the following result.
\begin{theoF}\cite{Liu & Yu & BAMS & 2019}
	Let $ f $ be a transcendental entire function, $ k $ be a positive integer, $ m $ and $ n $ are positive integers relatively prime to each other with $ n>2m+2 $, $ m\geq 2 $, and $ a_2(\neq 0) $, $ a_1 $ be rational functions. If $ \left(a_2f^n(z)+a_1f^{n-m}(z)\right)^{(k)} $ is a periodic function, then $ f $ is also a periodic function. 
\end{theoF}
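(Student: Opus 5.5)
The plan is to turn the periodicity hypothesis into a difference functional equation. Then I would use Theorem E to kill the polynomial error term, and a Nevanlinna-theoretic argument on the ratio $h(z)=f(z+c)/f(z)$ to conclude.

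\emph{Step 1.} Let $c\neq 0$ be a period of $\left(a_2f^n+a_1f^{n-m}\right)^{(k)}$ and put $g(z)=a_2(z)f^n(z)+a_1(z)f^{n-m}(z)$. Then $\left(g(z+c)-g(z)\right)^{(k)}\equiv 0$, so $g(z+c)-g(z)=p(z)$ with $p$ a polynomial of degree at most $k-1$. Write $F(z)=f(z+c)$ and $G(z)=f(z)$; both are transcendental entire and share the growth of $f$. Dividing by $a_2(z+c)$ gives
\begin{align*}
F^n+\frac{a_1(z+c)}{a_2(z+c)}F^{n-m}-\frac{a_2(z)}{a_2(z+c)}G^n-\frac{a_1(z)}{a_2(z+c)}G^{n-m}-\frac{p(z)}{a_2(z+c)}=0 ,
\end{align*}
whose coefficients are all rational.

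\emph{Step 2: $p\equiv0$.} If $p\not\equiv 0$, the constant term is a nonzero rational function, and Theorem E forbids transcendental meromorphic solutions $(F,G)$ whenever $n>2m+3$. The hypothesis $n>2m+2$ leaves the boundary case $n=2m+3$. Here I would rerun the Li--Yang second-main-theorem count directly, exploiting that $F$ and $G$ are \emph{entire}: all pole counting functions vanish. That should save exactly the one unit needed. This step, namely adapting the Li--Yang count to the entire case to gain one in $n$, is what I expect to be the main technical obstacle.

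\emph{Step 3: the homogeneous case.} Now $a_2(z+c)F^n+a_1(z+c)F^{n-m}=a_2G^n+a_1G^{n-m}$. Substitute $F=hG$ and cancel $G^{n-m}$ to get
\begin{align*}
f^m\left(a_2(z+c)h^n-a_2(z)\right)=a_1(z)-a_1(z+c)h^{n-m}.
\end{align*}
Suppose $a_2(z+c)h^n\not\equiv a_2(z)$. The roots $\omega_1,\dots,\omega_n$ of $a_2(z+c)w^n=a_2(z)$ are small (algebroid/rational) functions. A zero of $h-\omega_j$ at which the right side does not vanish would be a pole of the entire function $f^m$, so such zeros are impossible. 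Simultaneous vanishing of both sides forces $h^m$ to equal a fixed ratio of small functions, which happens only on an exceptional set. Hence $h$ essentially omits $n\ge 7$ small functions, and the second main theorem for small functions (using $T(r,h)\le 2T(r,f)+S(r,f)$) forces $h$ to be small with respect to $f$. In that case both sides of the displayed identity are small while $f^m$ is not, which is a contradiction.

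Therefore $h^n=a_2(z)/a_2(z+c)$, and then also $a_1(z+c)h^{n-m}=a_1(z)$. If $a_1\not\equiv0$, coprimality of $n$ and $m$ gives via Bezout that $h$ is a rational function built from these two ratios. Since the ratios tend to $1$ at $\infty$, I would show $h$ must be a constant $n$-th root of unity $\theta$, so that $f(z+c)=\theta f(z)$ and $f$ has period $nc$. This last part (ruling out nonconstant rational $h$, in particular when $a_1\equiv 0$ and $a_2$ is nonconstant) may need extra care. I would try to handle it by comparing the zeros and poles of $h$ along the lattice $z+jc$ together with the finiteness of zeros of $a_2$.
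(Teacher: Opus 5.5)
\textbf{Where the proof breaks.} The last step, which you already flagged, cannot be completed. You cannot show that $h$ is a constant root of unity, and no comparison of zeros and poles along $z+jc$ will do it, because the statement as quoted (with nonconstant rational $a_1,a_2$ allowed) is false.

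Take $f(z)=ze^{z}$, $a_2=z^{-n}$ and $a_1=z^{-(n-m)}$. Then $a_2f^n+a_1f^{n-m}=e^{nz}+e^{(n-m)z}$, so $(a_2f^n+a_1f^{n-m})^{(k)}$ has period $2\pi i$ for every $k$, while $f$ is not periodic. With $c=2\pi i$ your $h=f(z+c)/f(z)=(z+c)/z$ satisfies $h^n\equiv a_2(z)/a_2(z+c)$ and $h^{n-m}\equiv a_1(z)/a_1(z+c)$. It tends to $1$ at $\infty$, yet it is nonconstant.

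What your argument actually proves is $f(z+c)=R(z)f(z)$ with $R$ rational satisfying these two identities. Periodicity follows only when $a_2(z)/a_2(z+c)\equiv 1$ (and $a_1(z)/a_1(z+c)\equiv 1$ if $a_1\not\equiv 0$). Constant coefficients are the basic case: then $h^n\equiv 1$ makes $h$ a constant root of unity, and indeed $h\equiv 1$ if $a_1\neq 0$. The paper's route (Theorem E when $p\not\equiv 0$, Theorem D when $p\equiv 0$, as in its proof of Theorem~\ref{th1.4}) simply writes $h^n=1$, so it tacitly imposes the same restriction.

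\textbf{The rest of your plan.} Under that restriction the plan works, and in two places it is sounder than the paper.

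Your expectation in Step 2 is correct. Apply the second main theorem with the three small targets $0$, $\delta=p/a_2(z+c)$, $\infty$ to $\phi=F^{n-m}\bigl(F^m+a_1(z+c)/a_2(z+c)\bigr)$, and symmetrically to the $G$-side. This gives
\[
(n-2m-2)\bigl(T(r,F)+T(r,G)\bigr)\le \overline N(r,F)+\overline N(r,G)+S(r,F)+S(r,G),
\]
so for entire $f$ only $n>2m+2$ is needed. The paper instead quotes Theorem E, which requires $n>2m+3$, and never addresses the case $n=2m+3$.

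In Step 3, Theorem D assumes nonzero constant terms, so it does not literally cover the homogeneous equation. Your direct argument does, after some tidying:
\begin{itemize}
\item The roots $\omega_j$ are multivalued in general. They are single-valued only near $\infty$, where $a_2(z)/a_2(z+c)\to 1$.
\item A zero of $a_2(z+c)h^n-a_2(z)$ forces $h^{n-m}=a_1(z)/a_1(z+c)$ as well. By Bezout this gives $h=R$ for a single rational $R$, which may coincide with one branch. So $h$ omits at least $n-1$ of the $\omega_j$ near $\infty$, not all $n$.
\item The bound $T(r,h)\le 2T(r,f)+S(r,f)$ fails for general infinite-order $f$, but you do not need it. A M\"obius change of variable with coefficients holomorphic at $\infty$, plus the great Picard theorem, already makes $h$ rational. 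Then $f^m$ is rational, a contradiction.
\end{itemize}
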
 
The next example shows that Theorem F still holds for transcendental meromorphic functions. 
\begin{exm}
	Let $ f(z)=\tan z $. Then \begin{align}
		a_2f^n(z)+a_1f^{n-m}(z)=(\tan z)^{n-m}\left(a_2\tan z^m+a_1\right),
	\end{align} then $\left(a_2f^n(z)+a_1f^{n-m}(z)\right)^{(k)}$ is periodic, and $ f $  is a periodic function.    
\end{exm} 
For a meromorphic function $ f $, we define \begin{align}
	\Psi_{f(z)}:=a_2f^2(z)+a_1f(z),\; a_2(\neq 0), \; a_1\in\mathbb{C}
\end{align}
and
\begin{align}
	\Phi_{f(z)}:=b_3f^3(z)+b_1f(z),\; b_3(\neq 0), \; b_1\in\mathbb{C}.
\end{align}
Although several authors have investigated the conjectures of Yang concerning the periodicity of transcendental entire functions (see, e.g., \cite{Liu & Yu & BAMS & 2019, Wan & Hu & AMS & 2018}), a significant gap remains in the literature regarding the behavior of general transcendental meromorphic functions. Most established results, including Theorem A and Theorem C, rely heavily on the holomorphicity of the functions under consideration to circumvent the analytical difficulties arising from poles. Furthermore, while Theorem F extends these techniques to differential-polynomial operators of the form $a_2f^n + a_1f^{n-m}$, its validity is strictly limited to the entire setting. At present, a unified framework is lacking to determine whether periodicity is preserved when the base function is replaced by more complex differential polynomials, such as $\Psi_{f}$ or $\Phi_{f}$, within a meromorphic environment.\vspace{1.2mm}

 Motivated by Theorem F, we pose the following questions for further study:
\begin{ques}\label{Q-2.1}
	Does Theorem F hold for transcendental meromorphic functions?
\end{ques}
\begin{ques}\label{Q-2.2}
	Does $f$ remain periodic if we replace the expression $a_2f^n(z)+a_1f^{n-m}(z)$ in Theorem F by $a_2[\Psi_{f}(z)]^n+a_1[\Psi_{f}(z)]^{n-m}$? 
\end{ques}
\begin{ques}\label{Q-2.3}
	Does $f$ remain periodic if we replace the expression $a_2f^n(z)+a_1f^{n-m}(z)$ in Theorem F by $a_2[\Phi_{f}(z)]^n+a_1[\Phi_{f}(z)]^{n-m}$? 
\end{ques}
To bridge this gap, the primary objective of the present paper is to investigate the validity of these classical and modern periodicity results within the broader framework of transcendental meromorphic functions. We answer Questions~\ref{Q-2.1}, \ref{Q-2.2}, and \ref{Q-2.3} in the affirmative. Specifically, we extend Theorem A and Theorem C to the meromorphic setting (Theorems~\ref{th1.1} and \ref{th1.2}), and demonstrate that the corresponding third-order operator also forces periodicity (Theorem~\ref{th1.3}). Most notably, we resolve the open questions concerning Theorem F by establishing Theorems~\ref{th1.4} and \ref{th1.5}, which show that substituting the differential polynomials $\Psi_{f}$ and $\Phi_{f}$ into the polynomial expression preserves the periodicity of the underlying function $f$.
\begin{thm}\label{th1.4}
	Let $ f $ be a transcendental meromorphic function, $ k $ be a positive integer, $ m $ and $ n $ are positive integers relatively prime to each other with $ n>2m+2 $, $ m\geq 2 $, and $ r_2(\neq 0) $, $ r_1 $ be non-zero rational functions. If $ \left(r_2\Psi_{f(z)}^n+r_1\Psi_{f(z)}^{n-m}\right)^{(k)} $ is a periodic function, then $ f $ is also periodic and of period $ nc $ or $ 2nc $.
\end{thm}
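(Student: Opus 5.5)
Set $F(z):=r_2(z)\Psi_{f(z)}^n+r_1(z)\Psi_{f(z)}^{n-m}$ and suppose $F^{(k)}$ has period $c\neq 0$. The argument has three steps: integrate the periodicity of $F^{(k)}$ down to $F$, then peel off the outer polynomial using Theorem E and Theorem D, and finally peel off the quadratic $\Psi_f$.

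\textbf{Step 1: integrate.} The function $F(z+c)-F(z)$ is meromorphic and its $k$-th derivative vanishes identically. Hence
\begin{align*}
F(z+c)=F(z)+p(z),
\end{align*}
where $p$ is a polynomial of degree at most $k-1$.

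\textbf{Step 2: kill $p$ and remove the outer polynomial.} Write $G(z)=\Psi_{f(z+c)}$ and $H(z)=\Psi_{f(z)}$. Since $f$ is transcendental and $a_2\neq0$, both $G$ and $H$ are transcendental meromorphic. Divide the identity from Step 1 by $r_2(z+c)$. This gives
\begin{align*}
G^n+\frac{r_1(z+c)}{r_2(z+c)}G^{n-m}-\frac{r_2(z)}{r_2(z+c)}H^n-\frac{r_1(z)}{r_2(z+c)}H^{n-m}-\frac{p(z)}{r_2(z+c)}=0,
\end{align*}
in which every coefficient is rational.

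\emph{Case $p\not\equiv0$.} The constant term is a nonzero rational function, so Theorem E (with $\gcd(m,n)=1$, $m\ge2$) forbids transcendental meromorphic solutions. Theorem E needs $n>2m+3$, while we only have $n>2m+2$. The remaining case $n=2m+3$ I would handle by rerunning the Nevanlinna argument behind Theorem D/E: compare counting functions via the second main theorem for $G^{m}(G^{n-m}+\alpha)$. This is the step I expect to be the main obstacle. As a fallback I would cite the version Liu–Yu used for Theorem F, which already works under $n>2m+2$. Either way we conclude $p\equiv0$.

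\emph{Case $p\equiv0$.} Now $G^n+a_1G^{n-m}=c_0(H^n+a_2H^{n-m})$, and Theorem D (in its admissible-solution form) gives $G=hH$ with $h^n=c_0$ and $h^m=a_1/a_2$. Here $h$ is rational, since $h^n$ is rational. From $h^n,h^m$ rational and $\gcd(m,n)=1$ we get $h=(h^n)^s(h^m)^t$ for integers $s,t$ with $sn+tm=1$. Moreover, $\Psi_f(z+c)=h(z)\Psi_f(z)$ with $\Psi_f$ transcendental. If $h$ is non-constant, I would argue by growth that this is impossible: iterating the relation and applying the lemma on the logarithmic derivative, $h$ must be a constant $\theta$. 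Then $\theta^n=r_2(z)/r_2(z+c)$ is constant as well, and a rational function with constant ratio under a shift is constant, so $\theta^n=1$. Consequently $\Psi_f(z+nc)=\Psi_f(z)$.

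\textbf{Step 3: go from $\Psi_f$ to $f$.} Set $d=nc$. Then
\begin{align*}
a_2\bigl(f(z+d)-f(z)\bigr)\Bigl(f(z+d)+f(z)+\tfrac{a_1}{a_2}\Bigr)=0,
\end{align*}
and since meromorphic functions form an integral domain, one factor vanishes identically.
\begin{itemize}
\item If the first factor vanishes, then $f(z+d)=f(z)$, i.e. $f$ has period $nc$.
\item If the second vanishes, then $f(z+d)=-f(z)-a_1/a_2$. Applying this twice gives $f(z+2d)=f(z)$, so $f$ has period $2nc$.
\end{itemize}
This yields the statement of Theorem~\ref{th1.4}.
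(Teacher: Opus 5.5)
The failing step is the constancy of $h$ in Step 2 (case $p\equiv 0$). You claim that $\Psi_f(z+c)=h(z)\Psi_f(z)$, with $\Psi_f$ transcendental and $h$ rational, forces $h$ to be constant ``by growth''. No growth or logarithmic-derivative argument can give this. For example, $\Psi(z)=\sin^2 z/z^2$ satisfies $\Psi(z+\pi)=\frac{z^2}{(z+\pi)^2}\Psi(z)$, and $\Gamma(z+1)=z\Gamma(z)$ is another instance.

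The gap cannot be closed, because the statement is false for genuinely non-constant rational coefficients. Take $a_2=1$, $a_1=0$ (permitted in the definition of $\Psi_f$), $f(z)=\sin z/z$, $r_2(z)=z^{2n}$ and $r_1(z)=z^{2(n-m)}$, with any admissible $m,n$. Then
\[
r_2(z)\Psi_{f(z)}^n+r_1(z)\Psi_{f(z)}^{n-m}=\sin^{2n}z+\sin^{2(n-m)}z
\]
has period $\pi$, so its $k$-th derivative is periodic. Yet $f$ is transcendental and not periodic: a period $c$ would force $f(jc)=1$ for all $j\in\mathbb{Z}$, whereas $|f(jc)|\to 0$ if $c$ is real and $|f(jc)|\to\infty$ otherwise. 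In your argument this is exactly the case $p\equiv0$ with $h(z)=z^2/(z+\pi)^2$. This $h$ satisfies both conditions from Theorem D, namely $h^n=r_2(z)/r_2(z+\pi)$ and $h^m=r_1(z+\pi)r_2(z)/\bigl(r_1(z)r_2(z+\pi)\bigr)$. So everything up to the constancy claim is consistent, and only that claim breaks.

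The paper's proof never meets this obstruction because it writes \eqref{e3.7} with the same, unshifted $r_1,r_2$ on both sides, i.e.\ it tacitly treats them as constants. Theorem D then gives $h^n=1$ and also $h^m=1$, so $h=1$ by coprimality. Your Step 3, which is Case A of Lemma~\ref{lem2.2}, then finishes. Thus your outline, like the paper's, is a proof only for constant $r_1,r_2$, and there it even yields $\Psi_f(z+c)=\Psi_f(z)$.

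Two smaller points are still missing even in that case:
\begin{itemize}
\item Theorem D as quoted requires nonzero constant terms. In the case $p\equiv0$ you must create them, e.g.\ by adding $1$ to both sides, i.e.\ taking $b_1=1$, $b_2=c_0^{-1}$.
\item The case $n=2m+3$ is not covered by Theorems D and E as quoted, and your proposal leaves it only as a plan. The paper applies Theorem E there without comment. Theorem F is presented as a consequence of Theorems D and E, so citing it gives no independent route.
\end{itemize}
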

\begin{thm}\label{th1.5}
	Under the same hypothesis of Theorem \ref{th1.4}, for a transcendental meromorphic function $ f $ if $ \left(r_2\Phi_{f(z)}^n+r_1\Phi_{f(z)}^{n-m}\right)^{(k)} $ is a periodic function, then $ f $ is also periodic and of period $ nc $ or $ 2nc $ or $ 3nc $.  
\end{thm}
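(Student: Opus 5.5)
Following the scheme behind Theorem F, I write $G(z):=r_2\Phi_{f(z)}^n+r_1\Phi_{f(z)}^{n-m}$ and let $c\neq 0$ be a period of $G^{(k)}$. Integrating $G^{(k)}(z+c)-G^{(k)}(z)\equiv 0$ $k$ times gives
\begin{align*}
G(z+c)=G(z)+p(z),
\end{align*}
where $p$ is a polynomial of degree at most $k-1$. Put $F(z)=\Phi_{f(z+c)}$ and $H(z)=\Phi_{f(z)}$. Since $f$ is transcendental meromorphic and $b_3\neq 0$, both $F$ and $H$ are transcendental meromorphic. Moreover $T(r,F)$ and $T(r,H)$ are comparable to $T(r,f)$ outside an exceptional set (by Valiron--Mohon'ko for $H$, and in general simply because $F$ is a shift of $H$ and $T(r,H)=3T(r,f)+O(1)$). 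Hence the rational functions $r_i(z)$, $r_i(z+c)$ and $p(z)$ are small functions with respect to $F$ and $H$. The relation becomes
\begin{align*}
r_2(z+c)F^n+r_1(z+c)F^{n-m}-r_2(z)H^n-r_1(z)H^{n-m}-p(z)=0.
\end{align*}

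The first step is to show $p\equiv 0$. I divide by $r_2(z+c)$ and by a suitable normalisation of $H$, writing $H=\lambda\widetilde H$ with $\lambda^n=r_2(z+c)/r_2(z)$ (algebraic, possibly after passing to a finite branch; alternatively absorb the constant via rational coefficients). This puts the equation in the form of Theorem E with $a_4=-p/r_2(z+c)$. If $p\not\equiv 0$, Theorem E (with $n>2m+3$; for the boundary case $n=2m+3$ allowed by $n>2m+2$ I would redo the Nevanlinna second main theorem estimate directly, as Liu--Yu did) rules out transcendental meromorphic solutions. So $p\equiv 0$, and Theorem D then applies with $b_1=b_2=0$ replaced by the homogeneous version: we obtain $F=hH$ with $h^n=r_2(z)/r_2(z+c)$ and $h^m=r_1(z)/r_1(z+c)$. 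Because $\gcd(m,n)=1$, $h$ is a rational function, and $h=h^{an+bm}$ for integers $a,b$ with $an+bm=1$. A growth/degree comparison of $r_i(z+c)$ against $r_i(z)$ shows these ratios tend to $1$ at $\infty$, and being products of them $h$ must then be a constant $n$-th root of unity $\theta$. Thus $\Phi_{f(z+c)}=\theta\Phi_{f(z)}$, and therefore $\Phi_{f(z+nc)}=\Phi_{f(z)}$.

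The second step reduces to Theorem \ref{th1.3}-type reasoning with $c$ replaced by $nc$. Write $g(z)=f(z+nc)$, so that $b_3(g^3-f^3)+b_1(g-f)=0$. Either $g\equiv f$, or
\begin{align*}
b_3\left(g^2+gf+f^2\right)=-b_1.
\end{align*}
In the latter case, if $b_1=0$ then $g=\omega f$ with $\omega^3=1$, $\omega\ne1$, which gives period $3nc$. If $b_1\neq0$, the conic $u^2+uv+v^2=-b_1/b_3$ is a genus-zero curve, and I parametrise it. Combined with the analogous relations between $f(z+jnc)$ for $j=1,2$, and applying the same identity to the pair $(g,f(z+2nc))$, this yields $f(z+2nc)\in\{f,\ -f-g\}$. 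In the branch $f(z+2nc)=-f-g$ I then get $f(z+3nc)=f(z)$, so the period is $3nc$. In the branch $f(z+2nc)=f$, the period is $2nc$. This produces the stated alternatives $nc$, $2nc$, $3nc$.

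The main obstacle is the first step. Theorems D and E are stated with hypothesis $n>2m+3$, while the theorem assumes only $n>2m+2$, and the coefficients $r_i(z+c)$ versus $r_i(z)$ must be handled. I expect to redo the counting in Li--Yang's argument: apply the second main theorem to $F^m$ (or to $F^n/H^n$) using the zeros of $F^m+r_1/r_2$ and of $H$, and exploit that poles of $f$ make $\Phi_f$ have poles of multiplicity at least $3$. I also expect to have to justify carefully that $h$ is constant rather than merely rational.
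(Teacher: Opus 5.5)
Your outline is the same as the paper's: integrate $k$ times, rule out $p\not\equiv 0$ by Theorem E, use Theorem D to get $\Phi_{f(z+c)}=h\,\Phi_{f(z)}$, then run the argument of Lemma~\ref{lem2.3} with $nc$ in place of $c$. Your second step is fine.

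\textbf{The gap is the claim that $h$ is a constant root of unity.} Theorem D only gives that $h$ is rational with $h^n=r_2(z)/r_2(z+c)$ and $h^{n-m}=r_1(z)/r_1(z+c)$. (The exponent $m$ in your second relation should be $n-m$.) Knowing that $h\to 1$ at $\infty$ does not make $h$ constant. Every $h=\rho(z)/\rho(z+c)$ with $\rho$ rational satisfies both relations whenever $r_2=A\rho^n$ and $r_1=B\rho^{n-m}$. In that case you only learn that $\rho\,\Phi_f$ is $c$-periodic, not $\Phi_f$.

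This step cannot be repaired, because the statement fails in exactly this situation. Take $b_3=1$, $b_1=0$, $f(z)=\sin z/z$, $r_2(z)=z^{3n}$ and $r_1(z)=z^{3(n-m)}$. Then
\[
r_2\Phi_{f(z)}^n+r_1\Phi_{f(z)}^{n-m}=\sin^{3n}z+\sin^{3(n-m)}z
\]
is $2\pi$-periodic, so its $k$-th derivative is periodic. But $f$ is not periodic: its zero set $\pi\mathbb{Z}\setminus\{0\}$ is invariant under no non-zero translation. With $c=2\pi$ you have $p\equiv 0$, and $h(z)=z^3/(z+2\pi)^3$ satisfies both relations and tends to $1$ at $\infty$.

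The paper's proof avoids this only because its integrated identity \eqref{e3.7} carries unshifted coefficients $r_i(z)$ on both sides. That is legitimate only when $r_1,r_2$ are constants. In that case Theorem D gives $h^n=h^{n-m}=1$, hence $h=1$ by coprimality, and $\Phi_f$ is $c$-periodic. So your argument works for constant coefficients, but not for genuinely rational $r_i$.

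Two further points remain unproved in your write-up, as in the paper:
\begin{enumerate}
\item Theorems D and E require $n>2m+3$. The case $n=2m+3$ permitted by $n>2m+2$ is therefore not covered, and a promised new second-main-theorem estimate is not a proof.
\item Theorem D assumes $b_1,b_2\not\equiv 0$. You should arrange this explicitly, for example by adding $1$ to both sides of the homogeneous identity, rather than appeal to an unstated homogeneous version.
\end{enumerate}
Finally, your normalization $H=\lambda\widetilde H$ is unnecessary, since Theorem E already allows a rational coefficient on $G^n$.
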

The novelty of this work lies in overcoming the substantial technical difficulties that arise when transitioning from entire to meromorphic functions. Unlike previous approaches that are restricted to entire functions to avoid poles by design, our proofs utilize advanced methods of Nevanlinna value distribution theory and systematically analyze the pole structures via Lemmas~\ref{lem2.1}, \ref{lem2.2}, and \ref{lem2.3}. Furthermore, our extension of Theorem F to incorporate the differential polynomials $\Psi_{f}$ and $\Phi_{f}$ introduces a new class of operators to this specific branch of complex difference and differential equations, thereby significantly generalizing the previously known boundaries of Yang's conjectures.
\section{\bf Key lemmas}
In this section, we will discuss some lemmas which will required later to prove our main results. We employ the standard notation of Nevanlinna theory, including the proximity function $m(r, f)$, the counting function $N(r, f)$, the un-integrated counting function $n(r, f)$, and the characteristic function $T(r, f)$ of a meromorphic function $f$. Recall that Nevanlinna’s First Main Theorem states that for any $a \in \mathbb{C}$:
\[
T\left(r, \frac{1}{f - a}\right) = T(r, f) + O(1),
\]
where $O(1)$ denotes a bounded error term depending on $a$ \cite{Laine & 1993}. Throughout this paper, $S(r, f)$ denotes any quantity satisfying $S(r, f) = o(T(r, f))$ as $r \to \infty$, potentially outside a set of finite linear measure. A meromorphic function $h(z)$ is termed a \textit{small function} with respect to $f$ if $T(r, h) = S(r, f)$. It is well-known that the set of such small functions forms a field, and any finite sum of quantities of the form $S(r, f)$ remains $S(r, f)$.
\begin{lem}\cite{Yan & Yi & 2003}\label{lem2.1}
	Let $ f_1, f_2 $ and $ f_3 $ be meromorphic functions and $ f_1(z) $ be non-constant. If \begin{align}
		f_1+f_2+f_3=1
	\end{align} and \begin{align}
		\sum_{j=1}^{3}N\left(r,\frac{1}{f_j}\right)+2\sum_{j=1}^{3}\overline{N}\left(r,f_j\right)<\left(\lambda+o(1)\right)\; T(r),
	\end{align} where $ \lambda<1 $ and $ T(r):=\displaystyle\max_{j=1,2,3}\bigg\{T(r,f_j)\bigg\}, $ then $ f_2(z)\equiv 1 $ or $ f_3(z)\equiv 1. $
\end{lem}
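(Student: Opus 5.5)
The plan is to argue by contradiction, the way Borel-type lemmas are usually proved: use a Wronskian argument when $f_1,f_2,f_3$ are linearly independent, and the second main theorem when they are dependent. So suppose $f_2\not\equiv 1$ and $f_3\not\equiv 1$.

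\emph{Independent case.} Here $\Delta:=W(f_1,f_2,f_3)\not\equiv 0$. Differentiating $f_1+f_2+f_3=1$ twice and applying Cramer's rule gives
\begin{align*}
f_1=\frac{W(1,f_2,f_3)}{W(f_1,f_2,f_3)}.
\end{align*}
Factoring $f_1f_2f_3$ out of $\Delta$ and $f_2f_3$ out of the numerator turns this into $f_1=D_1/D$. Here $D$ is the $3\times 3$ determinant whose rows are $(1,1,1)$, $(f_j'/f_j)$ and $(f_j''/f_j)$, and $D_1$ is the $2\times 2$ minor built from $f_2'/f_2,f_3'/f_3,f_2''/f_2,f_3''/f_3$.
\begin{itemize}
\item By the lemma on the logarithmic derivative, $m(r,D_1)$ and $m(r,D)$ are $o(T(r))$ outside an exceptional set. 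Note that every $S(r,f_j)$ is $o(T(r))$ by the definition of $T(r)$.
\item Hence $m(r,f_1)\le m(r,1/D)+o(T(r))$. Jensen's formula then gives
\begin{align*}
T(r,f_1)\le N(r,f_1)+N(r,D)-N\left(r,\frac1D\right)+o(T(r)).
\end{align*}
\item The main step is a local count of poles. Poles of $D$ occur only at zeros and poles of the $f_j$. At a zero of $f_j$ of order $p$, the entries $f_j'/f_j$ and $f_j''/f_j$ have poles of order at most $1$ and $2$. At a pole, the rows combine with the pole of $f_1$ itself.
\item Checking each type of point, I expect $N(r,f_1)+N(r,D)-N(r,1/D)\le\sum_j N(r,1/f_j)+2\sum_j\overline N(r,f_j)$. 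This is exactly where the coefficient $2$ on $\overline N$ in the hypothesis comes from. I expect this bookkeeping, especially at common poles of several $f_j$, to be the main obstacle.
\end{itemize}
Combining these bounds with the hypothesis gives $T(r,f_1)\le(\lambda+o(1))T(r)$. The identity and the independence are symmetric in the indices, so the same bound holds for $f_2$ and $f_3$. Therefore $T(r)\le(\lambda+o(1))T(r)$, which is impossible since $\lambda<1$.

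\emph{Dependent case.} Now $c_1f_1+c_2f_2+c_3f_3\equiv 0$ with constants $c_j$ not all zero.
\begin{itemize}
\item If $c_3\ne 0$, eliminate $f_3=af_1+bf_2$ to get $(1+a)f_1+(1+b)f_2=1$.
\item If $c_3=0$ but $c_2\ne0$, eliminate $f_2$ symmetrically. If $c_2=c_3=0$, then $c_1f_1\equiv 0$ with $c_1\ne 0$, which contradicts $f_1$ being non-constant.
\item If $1+b=0$, then $f_1$ is constant, a contradiction. So we may assume $1+b\neq0$.
\item If also $1+a\neq0$, apply the second main theorem to $f_1$ at the three values $0$, $\infty$ and $1/(1+a)$. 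The $1/(1+a)$-points of $f_1$ are zeros of $f_2$, so
\begin{align*}
T(r,f_1)\le \overline N\left(r,\frac1{f_1}\right)+\overline N(r,f_1)+\overline N\left(r,\frac1{f_2}\right)+S(r,f_1).
\end{align*}
\item In this situation $f_2$ and $f_3$ are linear combinations of $f_1$ and $1$, so $T(r)=T(r,f_1)+O(1)$. The hypothesis then yields $T(r,f_1)\le(\lambda+o(1))T(r,f_1)$, a contradiction.
\item If $1+a=0$, then $f_2\equiv 1/(1+b)$ is constant, and since we assumed $f_2\not\equiv1$, we have $f_1+f_3=1-f_2=\gamma\ne 0$. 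Now apply the second main theorem to $f_1$ at $0$, $\infty$ and $\gamma$; the $\gamma$-points of $f_1$ are zeros of $f_3$. This gives the same contradiction.
\end{itemize}
Hence $f_2\equiv1$ or $f_3\equiv1$.
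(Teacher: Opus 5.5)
The paper does not prove this lemma; it simply quotes it from Yang--Yi. Your outline is the standard Borel--Nevanlinna argument: Cramer's rule with logarithmic derivatives when $f_1,f_2,f_3$ are linearly independent, and elimination plus the second main theorem when they are dependent. With two repairs it proves the statement. The dependent case is correct as written, and both repairs concern the independent case.

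\textbf{First repair: the Cramer formula.} Your displayed formula is wrong. Adding the second and third columns of $W(f_1,f_2,f_3)$ to the first shows $W(f_1,f_2,f_3)\equiv W(1,f_2,f_3)$, so your quotient is identically $1$. Factoring it as you describe would give $D_1/(f_1D)$, not $D_1/D$. The correct step is to regard $f_1+f_2+f_3=1$, $\sum_j (f_j'/f_j)f_j=0$, $\sum_j (f_j''/f_j)f_j=0$ as a linear system in the unknowns $f_1,f_2,f_3$ with coefficient determinant $D$. Cramer's rule then gives $f_1=D_1/D$ directly. Since that is the identity you use afterwards, nothing downstream breaks.

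\textbf{Second repair: the pole count.} The bookkeeping you left as ``expected'' cannot be done by bounding $N(r,f_1)$ and the poles of $D$ separately. At a pole of $f_1$ of order $p$ this gives $p+3$, which $2\sum_j\overline{N}(r,f_j)$ cannot absorb. The missing idea is the zero of $D$ forced by $f_1D=D_1$.

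Write $\mathrm{ord}$ for the signed order (positive at zeros), and $\mathrm{ord}^+_\infty$, $\mathrm{ord}^+_0$ for pole and zero multiplicities. At every point $\mathrm{ord}^+_\infty f_1-\mathrm{ord}\,D=\mathrm{ord}^+_0 f_1-\mathrm{ord}\,D_1$, hence
\begin{align*}
N(r,f_1)+N(r,D)-N\left(r,\frac1D\right)=N\left(r,\frac1{f_1}\right)+N(r,D_1)-N\left(r,\frac1{D_1}\right).
\end{align*}
Now bound the poles of $D_1$:
\begin{itemize}
\item $D_1$ involves only $f_2'/f_2$, $f_2''/f_2$, $f_3'/f_3$, $f_3''/f_3$, so its poles have order at most $3$ everywhere.
\item At a pole of some $f_j$, at least two of the $f_j$ have poles, because their sum is $1$. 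So $2\sum_j\overline{N}(r,f_j)$ contributes at least $4$ there.
\item At a point where no $f_j$ has a pole, $D_1=W(f_2',f_3')/(f_2f_3)$ has a holomorphic numerator. So the pole order of $D_1$ there is at most the sum of the zero orders of $f_2$ and $f_3$.
\end{itemize}
This gives your inequality with room to spare. The symmetric bounds for $f_2$ and $f_3$ then finish the independent case as you describe.
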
	
\begin{lem}\label{lem2.2}
	For a transcendental meromorphic function  $ f $, if $  \Psi_{f(z)}^{(k)}\equiv \Psi^{(k)}_{f(z+c)}$ , then $ f $ is a periodic function of period $ c $ or $ 2c $. 
\end{lem}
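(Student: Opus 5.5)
Integrate the identity $\Psi^{(k)}_{f(z+c)}\equiv\Psi^{(k)}_{f(z)}$ $k$ times. This gives
\begin{align*}
a_2f^2(z+c)+a_1f(z+c)=a_2f^2(z)+a_1f(z)+p(z),
\end{align*}
where $p$ is a polynomial of degree at most $k-1$. The whole proof reduces to showing $p\equiv 0$. Once that is known, factoring gives
\begin{align*}
\bigl(f(z+c)-f(z)\bigr)\bigl(a_2(f(z+c)+f(z))+a_1\bigr)\equiv 0.
\end{align*}
The first factor vanishing means $f$ has period $c$. Otherwise $f(z+c)=-f(z)-a_1/a_2$. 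Applying this twice gives $f(z+2c)=-f(z+c)-a_1/a_2=f(z)$, so $f$ has period $2c$.

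To kill $p$, complete the square. Put $g(z)=f(z)+\frac{a_1}{2a_2}$, which is transcendental meromorphic. The relation becomes
\begin{align*}
g^2(z+c)-g^2(z)=\frac{p(z)}{a_2}=:q(z).
\end{align*}
Suppose $q\not\equiv0$.

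I would first try to conclude with Lemma~\ref{lem2.1}. Divide by $q$ and set
\[
f_1=\frac{g^2(z+c)}{q},\qquad f_2=-\frac{g^2(z)}{q},\qquad f_3=0 .
\]
This choice of $f_3$ is degenerate. It is cleaner to use the factorisation
\begin{align*}
\bigl(g(z+c)-g(z)\bigr)\bigl(g(z+c)+g(z)\bigr)=q(z).
\end{align*}
Set $u=g(z+c)-g(z)$ and $v=g(z+c)+g(z)$. Then $uv=q$ is a nonzero polynomial.

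Every pole of $u$ must be cancelled by a zero of $v$, and vice versa. Zeros of $u$ and of $v$ can only occur at zeros of $q$, apart from the pole/zero cancellations. Hence $N(r,1/u)\le N(r,v)+O(\log r)$.

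Now write $g(z+c)=\frac{u+v}{2}$ and $g(z)=\frac{v-u}{2}$, so $v=q/u$. Then
\[
g(z+c)=\frac12\Bigl(u+\frac{q}{u}\Bigr),\qquad g(z)=\frac12\Bigl(\frac{q}{u}-u\Bigr).
\]
Shifting the second formula gives $g(z+c)=\frac12\bigl(\frac{q(z+c)}{u(z+c)}-u(z+c)\bigr)$. Comparing the two expressions for $g(z+c)$ gives
\begin{align*}
u(z)+u(z+c)=\frac{q(z+c)}{u(z+c)}-\frac{q(z)}{u(z)}.
\end{align*}
The main obstacle is turning this into a contradiction, since $f$ is meromorphic and poles are not excluded a priori.

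My plan is to apply Lemma~\ref{lem2.1} to the normalised identity
\[
\frac{q(z+c)}{u(z)u(z+c)}-\frac{q(z)}{u(z)^2}-\frac{u(z+c)}{u(z)}=1 .
\]
Take $f_1=-\frac{u(z+c)}{u(z)}$ and let $f_2,f_3$ be the two $q$-terms. The counting functions $N(r,1/f_j)$ and $\overline N(r,f_j)$ are controlled by the zeros and poles of $u$ and of its shift. I expect the zero/pole bookkeeping to be the step where the argument may fail, because it must yield the $\lambda<1$ bound. If it fails, I would fall back on a growth argument: compare $T(r,u(z+c))$ with $T(r,u)$, and use $T(r,g)\le 2T(r,u)+O(\log r)$ to show $u$ is transcendental.

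The conclusion of Lemma~\ref{lem2.1} is $f_2\equiv1$ or $f_3\equiv1$. Either alternative forces $u^2$ or $u(z)u(z+c)$ to equal a polynomial, so $u$ is rational. Then $g$ is rational by the formula for $g$ in terms of $u$, contradicting transcendence. Therefore $q\equiv0$, and the factorisation argument above finishes the proof.
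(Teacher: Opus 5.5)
Your reduction to $q\equiv0$, the treatment of the case $q\equiv0$, and your identity for $u$ are all correct and match the paper's Case A. The gap is in the step meant to exclude $q\not\equiv0$.

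Lemma~\ref{lem2.1} needs $\sum_j N(r,1/f_j)+2\sum_j\overline N(r,f_j)\le(\lambda+o(1))T(r)$ with $\lambda<1$. Take your $f_3=-q(z)/u(z)^2$: every pole of $u$ is a double zero of $f_3$, and every zero of $u$ is a pole of $f_3$, while $T(r,f_3)\le 2T(r,u)+O(\log r)$. The same happens for $f_1$ and $f_2$. So the hypothesis essentially says that $u$ and $u(z+c)$ have few zeros and poles compared with their characteristic functions. For meromorphic $f$ nothing gives you this. As you noticed yourself, $u=g(z+c)-g(z)$ can have poles at common poles of $g(z)$ and $g(z+c)$, cancelled by zeros of $v$. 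Your fallback growth argument only shows that $u$ is transcendental, which contradicts nothing.

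There is also a slip in the endgame. The alternative $f_2\equiv1$ means $u(z)u(z+c)=q(z+c)$, and this does not force $u$ to be rational: $u(z)u(z+c)\equiv1$ is solved by $u(z)=\exp(e^{\pi i z/c})$. The contradiction must come from pairing it with $f_1+f_3\equiv0$, i.e.\ $u(z)u(z+c)=-q(z)$. This gives $q(z+c)\equiv-q(z)$, which is impossible for a nonzero polynomial. You also need to treat separately the case where $f_1=-u(z+c)/u(z)$ is constant.

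The paper uses the same skeleton but gets the hypothesis of Lemma~\ref{lem2.1} for free. It asserts $f(z+c)-f(z)=\mathcal{R}_1e^{g}$ and $a_2f(z+c)+a_2f(z)+a_1=\mathcal{R}_2e^{-g}$ with $\mathcal{R}_1\mathcal{R}_2=\mathcal{Q}_{k-1}$. Then the three terms of \eqref{e2.8} have only finitely many zeros and poles, and both alternatives give $\mathcal{Q}_{k-1}(z+c)=-\mathcal{Q}_{k-1}(z)$. But that representation is exactly the claim that $u$ has finitely many zeros and poles. This is immediate when $f$ is entire, but it does not follow from $uv=\mathcal{Q}_{k-1}$ when $f$ is meromorphic. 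So your worry is justified, and the paper's proof has the same gap at this point; copying its step would not repair yours.

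One way to close the gap is to avoid $u$ altogether:
\begin{enumerate}
\item Choose a polynomial $G_0$ with $G_0(z+c)-G_0(z)=q$; it is nonconstant because $q\not\equiv0$.
\item Then $\pi:=g^2-G_0$ is $c$-periodic and transcendental, and all its poles are multiple.
\item For every $j\in\mathbb{Z}$ we have $\pi+G_0(z+jc)=g(z+jc)^2$.
\item Hence every $a$-point of $\pi$ is multiple for each of the five distinct small targets $\infty,-G_0(z),\dots,-G_0(z+3c)$.
\item Yamanoi's second main theorem for small functions then gives $(3-\varepsilon)T(r,\pi)\le\tfrac{5}{2}T(r,\pi)+O(\log r)$ outside an exceptional set, which is a contradiction.
\end{enumerate}
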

\begin{proof}[\bf Proof of Lemma \ref{lem2.2}]
	We have $ \Psi_{f(z)}^{(k)}\equiv \Psi^{(k)}_{f(z+c)}$ \textit{i.e.,} \begin{align*}
		\bigg[a_2f^2(z+c)+a_1f(z+c)\bigg]^{(k)}\equiv \bigg[a_2f^2(z)+a_1f(z)\bigg]^{(k)}.
	\end{align*} By integrating $k$ times, we have
	\begin{align}
	\label{e2.6} a_2f^2(z+c)+a_1f(z+c)\equiv a_2f^{2}(z)+a_1f(z)+\mathcal{Q}{k-1}(z),
	\end{align}
	where $\mathcal{Q}{k-1}$ is a polynomial of degree at most $k-1$. Consequently, from \eqref{e2.6}, it follows that
	\begin{align}
	\label{e2.7} [f(z+c)-f(z)][a_2f(z+c)+a_2f(z)+a_1]=\mathcal{Q}_{k-1}(z).
	\end{align}
	Next, we distinguish the following cases.\\
	
	\noindent{\bf Case A.} If $\mathcal{Q}_{k-1}(z)\equiv 0$, then \eqref{e2.7} implies that either $f(z+c)=f(z)$ or $a_2f(z+c)+a_2f(z)+a_1=0$. Thus, $f$ is a periodic function with period $c$ or $2c$, respectively.
	\vspace{0.2cm}
	
	\noindent{\bf Case B.} Suppose that $\mathcal{Q}_{k-1}(z) \not\equiv 0$. Since $f(z)$ is a transcendental meromorphic function, \eqref{e2.7} ensures the existence of an entire function $g(z)$ and rational functions $\mathcal{R}_1(z)$ and $\mathcal{R}_2(z)$ with $\mathcal{R}_1(z)\mathcal{R}_2(z)=\mathcal{Q}_{k-1}(z)$ such that
	\begin{align*}
		f(z+c)-f(z)&=\mathcal{R}_1(z)e^{g(z)},\\ 
		a_2f(z+c)+a_2f(z)+a_1&=\mathcal{R}_2(z)e^{-g(z)}.
	\end{align*} 
	A simple manipulation yields
	\begin{align*}
		\begin{cases}
			f(z+c)=\dfrac{\mathcal{R}_2(z)e^{-g(z)}-a_1+a_2\mathcal{R}_1(z)e^{g(z)}}{2a_2},\vspace{1.2mm}\\
			f(z)=\dfrac{\mathcal{R}_2(z)e^{-g(z)}-a_1-a_2\mathcal{R}_1(z)e^{g(z)}}{2a_2},
		\end{cases} 
	\end{align*} 
	from which we obtain
	\begin{align}
		\label{e2.8} \frac{\mathcal{R}_2(z+c)}{\mathcal{R}_2(z)}e^{g(z)-g(z+c)}-a_2\frac{\mathcal{R}_1(z+c)}{\mathcal{R}_2(z)}e^{g(z)+g(z+c)}-a_2\frac{\mathcal{R}_1(z)}{\mathcal{R}_2(z)}e^{2g(z)}\equiv 1.
	\end{align} 
	Note that $-a_2\frac{\mathcal{R}_1(z)}{\mathcal{R}_2(z)}e^{2g(z)}\not\equiv 1$ as $f$ is transcendental. Consequently, Lemma~\ref{lem2.1} leads to the following two subcases:\\
	
	\noindent{\bf Case B.1.} The first possibility is 
	\begin{align*}
		\frac{\mathcal{R}_2(z+c)}{\mathcal{R}_2(z)}e^{g(z)-g(z+c)}\equiv 1\;\;\; \text{and}\;\;\; -a_2\frac{\mathcal{R}_1(z+c)}{\mathcal{R}_2(z)}e^{g(z)+g(z+c)}-a_2\frac{\mathcal{R}_1(z)}{\mathcal{R}_2(z)}e^{2g(z)}\equiv 0.
	\end{align*} In this case, it follows that $\mathcal{R}_1(z+c)\mathcal{R}_2(z+c)\equiv -\mathcal{R}_1(z)\mathcal{R}_2(z)$, which yields $\mathcal{Q}_{k-1}(z+c)=-\mathcal{Q}_{k-1}(z)$. This is a contradiction, since $\mathcal{Q}_{k-1}(z)$ is a polynomial.\\ 
	
	\noindent{\bf Case B.2.} The second possibility is 
	\begin{align*}
		-a_2\frac{\mathcal{R}_1(z+c)}{\mathcal{R}_2(z)}e^{g(z)+g(z+c)}\equiv 1\;\;\; \text{and}\;\;\; \frac{\mathcal{R}_2(z+c)}{\mathcal{R}_2(z)}e^{g(z)-g(z+c)}-a_2\frac{\mathcal{R}_1(z)}{\mathcal{R}_2(z)}e^{2g(z)}\equiv 0.
	\end{align*}In this case, we are also led to $\mathcal{Q}_{k-1}(z+c)=-\mathcal{Q}_{k-1}(z)$, yielding a contradiction.
\end{proof}	
\begin{lem}\label{lem2.3}
	For a transcendental meromorphic function  $ f $, if $  \Phi_{f(z)}^{(k)}\equiv \Phi^{(k)}_{f(z+c)}$ , then $ f $ is a periodic function of period $ c $ or $ 2c $ or $ 3c $. 
\end{lem}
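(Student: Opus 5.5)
We follow the scheme of the proof of Lemma~\ref{lem2.2}. Write $F(z)=f(z+c)$ and $G(z)=f(z)$. Integrating the identity $\Phi_{f(z+c)}^{(k)}\equiv\Phi_{f(z)}^{(k)}$ $k$ times gives
\begin{align*}
b_3F^3+b_1F\equiv b_3G^3+b_1G+\mathcal{Q}_{k-1}(z),
\end{align*}
where $\mathcal{Q}_{k-1}$ is a polynomial of degree at most $k-1$. This factors as
\begin{align*}
(F-G)\bigl[b_3(F^2+FG+G^2)+b_1\bigr]\equiv\mathcal{Q}_{k-1}(z).
\end{align*}
We then split into the cases $\mathcal{Q}_{k-1}\equiv 0$ and $\mathcal{Q}_{k-1}\not\equiv 0$.

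\textbf{Case A: $\mathcal{Q}_{k-1}\equiv 0$.} Either $F\equiv G$, so $f$ has period $c$, or $b_3(F^2+FG+G^2)+b_1\equiv 0$. In the second situation, first suppose $b_1=0$. Then $F=\omega G$ with $\omega$ a primitive cube root of unity, so $f(z+3c)=\omega^3f(z)=f(z)$ and $f$ has period $3c$.

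If $b_1\neq0$, the relation describes a conic $x^2+xy+y^2=-b_1/b_3$, which has genus zero. We would parametrize it rationally and show, using the transcendence of $f$, that the shift map $f(z)\mapsto f(z+c)$ corresponds to an order-$2$ or order-$3$ involution-type symmetry of the conic. A possible route is to note $F^2+FG+G^2=(F-\omega G)(F-\omega^2G)$, set $F-\omega G=\alpha$ and $F-\omega^2 G=\beta$ with $\alpha\beta=-b_1/b_3$ a nonzero constant, and then track how $(\alpha,\beta)$ transforms under $z\mapsto z+c$.

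This is the step I expect to be the main obstacle. The conic relation alone need not force periodicity unless it is iterated. Applying the relation at both $z$ and $z+c$ gives that $G(z)$ and $f(z+2c)$ are both roots of the quadratic $b_3(X^2+FX+F^2)+b_1=0$ in $X$. Hence either $f(z+2c)=f(z)$, giving period $2c$, or $f(z+2c)+f(z)=-f(z+c)$, which yields $f(z+3c)=f(z)$. So the periods $c$, $2c$ and $3c$ arise naturally.

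\textbf{Case B: $\mathcal{Q}_{k-1}\not\equiv0$.} As in Lemma~\ref{lem2.2}, we write
\begin{align*}
F-G=\mathcal{R}_1e^{g},\qquad b_3(F^2+FG+G^2)+b_1=\mathcal{R}_2e^{-g},\qquad \mathcal{R}_1\mathcal{R}_2=\mathcal{Q}_{k-1}.
\end{align*}
Using $F^2+FG+G^2=\tfrac{3}{4}(F+G)^2+\tfrac{1}{4}(F-G)^2$, we can solve for $(F+G)^2$ in terms of $e^{\pm g}$ and $e^{2g}$. Eliminating via the shift relation, that is, comparing the expression for $F$ obtained from $z$ with the one obtained from $z+c$, we aim to reach an identity $f_1+f_2+f_3\equiv 1$ whose terms are rational multiples of exponentials of $g(z)$ and $g(z+c)$.

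The square root is an additional difficulty compared with Lemma~\ref{lem2.2}. To handle it, we would instead work with $(F+G)^2$ and $(F+G)(z+c)$ relations, squaring to obtain exponential polynomials. Applying Lemma~\ref{lem2.1}, each surviving alternative forces a relation of the form $\mathcal{Q}_{k-1}(z+c)=\epsilon\mathcal{Q}_{k-1}(z)$ with $\epsilon\neq1$ a root of unity, or else forces $g$ to be constant. The first is impossible for a nonzero polynomial, since comparing leading coefficients gives $\epsilon=1$. The second makes $f$ rational, contradicting transcendence.
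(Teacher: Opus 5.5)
Your Case A is the paper's Case 1 and is fine. Writing $b_3(F^2+FG+G^2)+b_1\equiv0$ at $z$ and at $z+c$ and subtracting gives $[f(z+2c)-f(z)][f(z+2c)+f(z+c)+f(z)]\equiv0$, hence period $2c$, or period $3c$ after one more shift; the $b_1=0$ subcase and the conic discussion are unnecessary. The gap is Case B, which carries all the difficulty and which you only outline.

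First, the representation $F-G=\mathcal{R}_1e^{g}$, $b_3(F^2+FG+G^2)+b_1=\mathcal{R}_2e^{-g}$ with $\mathcal{R}_1,\mathcal{R}_2$ rational does not follow from the product being a polynomial once $f$ has poles. If $z_0$ is a pole of $f$ of order $p$, then $f(z+c)$ must also have a pole of order exactly $p$ at $z_0$; otherwise the product has a pole of order at least $3p$. Then there are two possibilities:
\begin{itemize}
\item the two leading coefficients are equal, and $F-G$ vanishes to order at least $2p$ at $z_0$;
\item their ratio is a primitive cube root of unity, and $F-G$ has a pole at $z_0$, cancelled by a zero of the quadratic factor.
\end{itemize}
So if $f$ has infinitely many poles, $F-G$ has infinitely many zeros or poles and is not a rational function times $e^{g}$. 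Your reduction, like the paper's, therefore only covers $f$ with finitely many poles, i.e.\ essentially the entire case.

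Second, even granting that representation, the elimination is never carried out. Put $s=F+G$ and $u=F-G$. The shift condition $s(z+c)=s(z)+u(z)+u(z+c)$ must be squared to express $s$, then combined with $s^2=\frac{4}{3b_3}(\mathcal{R}_2e^{-g}-b_1)-\frac13\mathcal{R}_1^2e^{2g}$. The result is an identity with many exponential terms built from $g(z)$ and $g(z+c)$, for instance $e^{4g(z)}$, $e^{-2g(z+c)}$, $e^{2g(z)+2g(z+c)}$. It is not a three-term identity $f_1+f_2+f_3\equiv1$, so Lemma~\ref{lem2.1} does not apply as stated. You would need Borel's theorem for exponential sums, plus a case analysis of the possible linear relations between $g(z)$ and $g(z+c)$ modulo constants. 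The outcome you announce ($\mathcal{Q}_{k-1}(z+c)=\epsilon\mathcal{Q}_{k-1}(z)$ with $\epsilon\neq1$, or $g$ constant) is asserted, not derived.

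You cannot borrow this step from the paper either. Its Case 2 writes $f(z)$ and $f(z+c)$ via $\pm\mathcal{D}$ and declares \eqref{e22.7} impossible because $\mathcal{R}_1,\mathcal{R}_2$, hence $\mathcal{D}$, are not periodic. But \eqref{e22.7} does not assert that $\mathcal{D}$ is periodic, and $\mathcal{R}_1,\mathcal{R}_2$ may be constants. Case B needs a genuinely new argument.
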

\begin{proof}[\bf Proof of Lemma \ref{lem2.3}]
	By assumption, we have $\Phi_{f(z)}^{(k)}\equiv \Psi^{(k)}_{f(z+c)}$, \textit{i.e.,} 
	\begin{align*}
		\bigg[b_3f^3(z+c)+b_1f(z+c)\bigg]^{(k)}\equiv \bigg[b_3f^3(z)+b_1f(z)\bigg]^{(k)}.
	\end{align*} 
	Integrating $k$ times, we obtain 
	\begin{align}
		\label{e22.4} b_3f^3(z+c)+b_1f(z+c)\equiv b_3f^{3}(z)+b_1f(z)+\mathcal{P}_{k-1}(z),
	\end{align} 
	where $\mathcal{P}_{k-1}$ is a polynomial in $z$ of degree at most $k-1$. Thus, from \eqref{e22.4}, we get 
	\begin{align}
		\label{e22.5} [f(z+c)-f(z)][b_3f^2(z+c)+b_3f(z+c)f(z)+b_3f^2(z)+b_1]=\mathcal{P}_{k-1}(z).
	\end{align} 
	Now, we consider the following cases.
	\vspace{0.2cm}
	
	\noindent{\bf Case 1.} If $\mathcal{P}_{k-1}(z)\equiv 0$, it follows from \eqref{e22.5} that either $f(z+c)=f(z)$ (i.e., $f$ is periodic with period $c$), or 
	\begin{align}\label{e22.55} 
		b_3f^2(z+c)+b_3f(z+c)f(z)+b_3f^2(z)+b_1=0.
	\end{align} 
	By replacing $z$ with $z+c$, we must also have 
	\begin{align}\label{e22.66} 
		b_3f^2(z+2c)+b_3f(z+2c)f(z+c)+b_3f^2(z+c)+b_1=0.
	\end{align} 
	Subtracting \eqref{e22.55} from \eqref{e22.66} yields 
	\begin{align*}
		[f(z+2c)-f(z)][f(z+2c)+f(z+c)+f(z)]=0,
	\end{align*} 
	which implies that $f(z+2c)=f(z)$ or $f(z+2c)+f(z+c)+f(z)=0$. Standard periodicity arguments show that $f$ is either $2c$-periodic or $3c$-periodic.
	\vspace{0.2cm}
	
	\noindent{\bf Case 2.} If $\mathcal{P}_{k-1}(z) \not\equiv 0$, since $f(z)$ is a transcendental meromorphic function, it follows from \eqref{e22.5} that 
	\begin{align*}
		f(z+c)-f(z)&=\mathcal{R}_1(z)e^{h(z)},\\ 
		b_3f^2(z+c)+b_3f(z+c)f(z)+b_3f^2(z)+b_1&=\mathcal{R}_2(z)e^{-h(z)},
	\end{align*} 
	where $\mathcal{R}_1(z)$ and $\mathcal{R}_2(z)$ are two rational functions satisfying $\mathcal{R}_1(z)\mathcal{R}_2(z)=\mathcal{P}_{k-1}(z)$, and $h(z)$ is an entire function. An elementary calculation yields 
	\begin{align}\label{e222.5} 
		f(z+c)=\frac{3b_3\mathcal{R}_1(z)e^{h(z)}\pm \mathcal{D}(z)}{6b_3},\quad f(z)=\frac{-3b_3\mathcal{R}_1(z)e^{h(z)}\pm \mathcal{D}(z)}{6b_3},
	\end{align} 
	where $\mathcal{D}(z)$ satisfies 
	\begin{align*}
		\mathcal{D}^2(z)=9b_3^2\mathcal{R}^2_1(z)e^{2h(z)}-12b_3\left(b_3\mathcal{R}^2_1(z)e^{2h(z)}+b_1-\mathcal{R}_2(z)e^{-h(z)}\right).
	\end{align*}
	On the other hand, substituting $z+c$ into the expression for $f(z)$, we have 
	\begin{align}
		\label{e22.6} f(z+c)=\frac{-3b_3\mathcal{R}_1(z+c)e^{h(z+c)}\pm \mathcal{D}(z+c)}{6b_3}.
	\end{align} 
	Equating \eqref{e222.5} and \eqref{e22.6}, we see that 
	\begin{align}\label{e22.7} 
		\frac{-3b_3\mathcal{R}_1(z+c)e^{h(z+c)}\pm \mathcal{D}(z+c)}{6b_3}=\frac{3b_3\mathcal{R}_1(z)e^{h(z)}\pm \mathcal{D}(z)}{6b_3}.
	\end{align} 
	Since $\mathcal{R}_i(z)$ $(i=1, 2)$ are non-constant rational functions, they cannot be periodic. Consequently, $\mathcal{D}(z)$ is not periodic, which implies that identity \eqref{e22.7} cannot hold, yielding a contradiction. 
\end{proof}	
\section{\bf Proof of main results}
\subsection{\bf Proof of Theorem \ref{th1.1}}
Let $ f $ be a transcendental meromorphic function and $ \eta $ be non-zero Picard exceptional value. Then we can write $ f $ as \begin{align}\label{e3.1} f(z)=\eta+\frac{e^{\mathcal{Q}(z)}}{g(z)},
\end{align} where $ \mathcal{Q}(z) $ is a entire function such that $ T(r,\mathcal{Q})=S(r,f) $ and $ g $ is a non-constant non-exponential entire function. By our assumption, one can see that the function $ g $ must have zeros. Clearly, we see that \begin{align*}
	f^{(k)}(z)=\frac{\mathcal{H}_k\left(z,\mathcal{Q}(z),g(z)\right)}{g^{k+1}(z)},
\end{align*} where 
\begin{align*}
	\mathcal{H}_k\left(z,\mathcal{Q}(z),g(z)\right):= \mathcal{H}_k\left(z,\mathcal{Q}(z),\mathcal{Q}^{\prime}(z),\ldots,\mathcal{Q}^{(k)}(z),g(z),g^{\prime}(z),\ldots,g^{(k)}(z)\right)
\end{align*} is a differential polynomial in $ \mathcal{Q}(z) $ and $ g(z). $\vspace{1.2mm}

 Since $ ff^{(k)} $ is a periodic meromorphic function with period $ c $, then we have 
\begin{align}
	\label{e3.2} f(z)f^{(k)}(z)=f(z+c)f^{(k)}(z+c).
\end{align} Thus, we have 
\begin{align}
	\label{e3.3} & \frac{\mathcal{H}_k\left(z,\mathcal{Q}(z),g(z)\right)}{g^{k+2}(z)}e^{2\mathcal{Q}(z)}+\eta  \frac{\mathcal{H}_k\left(z,\mathcal{Q}(z),g(z)\right)}{g^{k+1}(z)}e^{\mathcal{Q}(z)}\\&= \frac{\mathcal{H}_k\left(z,\mathcal{Q}(z+c),g(z+c)\right)}{g^{k+2}(z+c)}e^{2\mathcal{Q}(z+c)}+\eta  \frac{\mathcal{H}_k\left(z,\mathcal{Q}(z+c),g(z+c)\right)}{g^{k+1}(z+c)}e^{\mathcal{Q}(z+c)}.\nonumber
\end{align} 
Re-writing (\ref{e3.3}), we see that 
\begin{align}
	\label{e3.4} f_1(z)+f_2(z)+f_3(z)=1,
\end{align} where 
\[
\begin{cases}
	f_1(z)\displaystyle=-\frac{1}{\eta}\frac{1}{g^{m}(z+c)}e^{\mathcal{Q}(z+c)}\\
	f_2(z)=\displaystyle\frac{1}{\eta}\frac{\mathcal{H}_k\left(z,\mathcal{Q}(z),g(z)\right)}{\mathcal{H}_k\left(z,\mathcal{Q}(z+c),g(z+c)\right)}\frac{g^{k+1}(z+c)}{g^{k+2}(z)}e^{2\mathcal{Q}(z)-\mathcal{Q}(z+c)}\\
	f_3(z) =\displaystyle
	\frac{\mathcal{H}_k\left(z,\mathcal{Q}(z),g(z)\right)}{\mathcal{H}_k\left(z,\mathcal{Q}(z+c),g(z+c)\right)}\frac{g^{k+1}(z+c)}{g^{k+1}(z)}e^{\mathcal{Q}(z)-\mathcal{Q}(z+c)}.
\end{cases}
\]
Evidently, the function $f_1$ is non-constant. Hence, Lemma \ref{lem2.1} yields the following two possibilities:\\

\noindent{\bf Possibility 1:-} 
\begin{align*}
	\displaystyle \frac{1}{\eta}\frac{\mathcal{H}_k\left(z,\mathcal{Q}(z),g(z)\right)}{\mathcal{H}_k\left(z,\mathcal{Q}(z+c),g(z+c)\right)}\frac{g^{k+1}(z+c)}{g^{k+2}(z)}e^{2\mathcal{Q}(z)-\mathcal{Q}(z+c)}\equiv 1
\end{align*} and 
\begin{align*}
	\frac{\mathcal{H}_k\left(z,\mathcal{Q}(z),g(z)\right)}{\mathcal{H}_k\left(z,\mathcal{Q}(z+c),g(z+c)\right)}\frac{g^{k+1}(z+c)}{g^{k+1}(z)}e^{\mathcal{Q}(z)-\mathcal{Q}(z+c)}-\frac{1}{\eta}\frac{1}{g(z+c)}e^{\mathcal{Q}(z+c)}=0. 
\end{align*} Combining these two, we obtain 
\begin{align}\label{e3.5} e^{\mathcal{Q}(z+c)+\mathcal{Q}(z)}=\eta^2 g(z)g(z+c).
\end{align} Since the function $g$ must have zeros by our assumption, we get a contradiction from \eqref{e3.5}.\\

\noindent{\bf Possibility 2:-} 
\begin{align*}
	\frac{\mathcal{H}_k\left(z,\mathcal{Q}(z),g(z)\right)}{\mathcal{H}_k\left(z,\mathcal{Q}(z+c),g(z+c)\right)}\frac{g^{k+1}(z+c)}{g^{k+1}(z)}e^{\mathcal{Q}(z)-\mathcal{Q}(z+c)}\equiv 1
\end{align*} and 
\begin{align*}
	\frac{1}{\eta}\frac{\mathcal{H}_k\left(z,\mathcal{Q}(z),g(z)\right)}{\mathcal{H}_k\left(z,\mathcal{Q}(z+c),g(z+c)\right)}\frac{g^{k+1}(z+c)}{g^{k+2}(z)}e^{2\mathcal{Q}(z)-\mathcal{Q}(z+c)}-\frac{1}{\eta}\frac{1}{g(z+c)}e^{\mathcal{Q}(z+c)}=0.
\end{align*} We therefore conclude that 
\begin{align}\label{e3.6} \frac{e^{\mathcal{Q}(z)}}{g(z)}=\frac{e^{\mathcal{Q}(z+c)}}{g(z+c)}.
\end{align} Finally, applying \eqref{e3.6} yields
\begin{align*}
f(z+c) = \eta + \frac{e^{\mathcal{Q}(z+c)}}{g(z+c)} = \eta + \frac{e^{\mathcal{Q}(z)}}{g(z)} = f(z).
\end{align*} 
Consequently, $f$ is a periodic meromorphic function with period $c$.
\subsection{\bf Proof of Theorem \ref{th1.2}.}
By hypothesis, $\left(a_2f^2(z)+a_1f(z)\right)^{(k)}$ is periodic. Thus, we have the identity 
\begin{align*}
	\Psi^{(k)}_{f}(z) = \Psi^{(k)}_{f}(z+c).
\end{align*} 
Applying Lemma~\ref{lem2.2}, it follows that the meromorphic function $f$ is periodic with period $c$ or $2c$, which completes the proof.
\subsection{\bf Proof of Theorem \ref{th1.3}.}
Since $\left(b_3f^3(z)+b_1f(z)\right)^{(k)}$ is a periodic function, we must have 
\begin{align*}
	\Phi^{(k)}_{f}(z) = \Phi^{(k)}_{f}(z+c).
\end{align*} 
By applying Lemma~\ref{lem2.3}, we obtain the desired conclusion.
\subsection{\bf Proof of Theorem \ref{th1.4}.} By our assumption, $\left(r_2\Psi^n_{f}(z)+r_1\Psi^{n-m}_{f}(z)\right)^{(k)}$ is periodic. Thus, we have 
\begin{align}\label{e3.7} 
	r_2\Psi^n_{f}(z)+r_1\Psi^{n-m}_{f}(z)=r_2\Psi^n_{f}(z+c)+r_1\Psi^{n-m}_{f}(z+c)+\mathcal{P}_1(z),
\end{align} 
where $\mathcal{P}_1(z)$ is a polynomial of degree at most $k-1$.
\vspace{0.2cm}

\noindent{\bf Case 1.} If $\mathcal{P}_1(z)\not\equiv 0$, then by Theorem E, there exists a transcendental meromorphic function $f$ satisfying equation \eqref{e3.7}. 
\vspace{0.2cm}

\noindent{\bf Case 2.} If $\mathcal{P}_1(z)\equiv 0$, then applying Theorem D yields $\Psi_{f}(z)=h\Psi_{f}(z+c)$, where $h^n=1$. Clearly, $\Psi_{f}(z)$ is periodic with period $nc$, i.e., $\Psi_{f}(z+nc)=\Psi_{f}(z)$ for all $z \in \mathbb{C}$. It follows that 
\begin{align*}
	a_2f^n(z+nc)+a_1f(z+nc)=a_2f^n(z)+a_1f(z).
\end{align*} 
By adopting the previous arguments in the proof of Lemma~\ref{lem2.2}, it follows that $f$ is a periodic function with period $nc$ or $2nc$. 
\subsection{Proof of Theorem \ref{th1.5}} 
Proceeding in exactly the same manner as in the proof of Theorem~\ref{th1.4} and utilizing Lemma~\ref{lem2.3}, we obtain the conclusion of Theorem~\ref{th1.5}.
\vspace{5mm}

\noindent\textbf{Ethical Approval.} Not applicable for this work.\\
\noindent\textbf{Competing interests.} The authors have not competing interest.\\
\noindent\noindent\textbf{Funding.} No funding for this research work.\\  
\noindent\textbf{Conflict of interest.} The authors declare that there is no conflict  of interest regarding the publication of this paper. \\
\noindent\textbf{Data availability statement.}  Data sharing not applicable to this article as no datasets were generated or analysed during the current study.

\end{document}